\documentclass[a4wide,11pt]{amsart}
\usepackage{amsmath, amsthm, amscd, amssymb}
\usepackage{tikz,cite}
\usepackage{caption,subcaption}
\usepackage{hyperref}
\newtheorem{thm}{Theorem}[section]
\newtheorem{prop}[thm]{Proposition}
\newtheorem{clm}[thm]{Claim}

\newtheorem{lem}[thm]{Lemma}
\theoremstyle{definition}
\newtheorem{problem}[thm]{Problem}

\newtheorem{rmk}[thm]{Remark}

\numberwithin{equation}{section}
\topmargin=-0.4cm
\oddsidemargin=-0.7cm
\evensidemargin=-0.7cm
\textwidth=17.4cm
\textheight=22cm
\parskip 2pt

\newcommand{\Z}{\mathbb{Z}}
\newcommand{\s}{\mathrm{s}}
\newcommand{\cs}{\mathrm{cs}}
\newcommand{\G}{\mathcal{G}^{\mathrm{cs}}}
\newcommand{\W}{\mathcal{W}^{\mathrm{cs}}}

\newenvironment{alg-enumerate}{%
  \begin{enumerate}%
 }{\end{enumerate}%
}

\begin{document}
\title{On {the} weighted safe set problem on paths and cycles}

\author[S. Fujita]{ Shinya Fujita}
\address[S. Fujita]{School of Data Science, Yokohama City University, Yokohama 236-0027, Japan}
\email{fujita@yokohama-cu.ac.jp}

\author[T. Jensen]{Tommy Jensen}
\address[T. Jensen]{Institute of Mathematics, Aarhus Universitet, 8000 Aarhus, Denmark}
\email{au611354@mit.au.dk}

\author[B. Park]{Boram Park}
\address[B. Park]{Department of mathematics, Ajou University, Suwon, 443-749, Republic of Korea.}
\email{borampark@ajou.ac.kr}

\author[T. Sakuma]{Tadashi Sakuma}
\address[T. Sakuma]{Faculty of Science, Yamagata University, Yamagata 990-8560, Japan}
\email{sakuma@sci.kj.yamagata-u.ac.jp}

\thanks{Fujita's work was supported by JSPS KAKENHI (No.~15K04979). Park's work was supported by Basic Science Research Program through the National Research Foundation of Korea (NRF) funded by the Ministry of  Science, ICT \& Future Planning (NRF-2018R1C1B6003577).
Sakuma's work was supported by JSPS
KAKENHI (No.~26400185, No.~16K05260 and No.~18K03388).}
\keywords{Safe set; Connected safe set; Weighted graph; Safe-finite; Subgraph component polynomial}

\date{\today}
\maketitle

\begin{abstract}

Let $G$ be a graph, and let {$w$ be a positive real-valued weight function on $V(G)$.} For every subset $X$ of $V(G)$, let $w(X)=\sum_{v \in X} w(v).$ A non-empty subset $S \subset V(G)$ is a {\it weighted safe set} of $(G,w)$ if, for every component $C$ of the subgraph induced by $S$ and every component $D$ of $G-S$, we have $w(C) \geq w(D)$ whenever there is an edge between $C$ and $D$.
If the subgraph of $G$ induced by a weighted safe set $S$ is connected, then the set $S$ is called a {\it connected weighted safe set} of $(G,w)$. The \textit{weighted safe number} $\s(G,w)$ and \textit{connected weighted safe number} $\cs(G,w)$ of $(G,w)$ are the minimum weights $w(S)$ among all weighted safe sets and all connected weighted safe sets of $(G,w)$, respectively. It is easy to see that for any pair $(G,w)$, $\s(G,w) \le \cs(G,w)$ by their definitions. In this paper, we discuss the possible equality when $G$ is a path or a cycle. We also give an answer to a problem due to Tittmann et al.\ [Eur. J. Combin. Vol. 32 (2011)] concerning subgraph component polynomials for cycles and complete graphs.
\end{abstract}

\section{Introduction}

We start with a question about number sequences in combinatorial number theory.
For a sequence $a_1, \ldots, a_n$ of positive integers and a segment $I$
consisting
of a
subsequence $a_i, a_{i+1},\ldots, a_{i+|I|-1}$, let $s(I)=\sum_{j=i}^{j=i+|I|-1}a_j$.
We consider partitioning $a_1,\ldots ,a_n$ into an odd number of non-empty
segments $I_1=\{a_1,\ldots, a_{|I_1|}\},\ldots, I_{2k+1}=\{a_{n-|I_{2k+1}|+1},
\ldots, a_n\}$ with $k\ge 1$ so that {the sequence of $s(I_1)$, $s(I_2)$, $\ldots$, $s(I_{2k+1})$ is a ``zigzag" sequence, i.e.,}
$\max\{s(I_{2j-1}), s(I_{2j+1})\}\le s(I_{2j})$ holds for all $j=1,\ldots,k$.
Whenever such segments exist,
we would like to choose them so that $\sum_{j=1}^{k}s(I_{2j})$ is as
small as possible and, subject to this condition, $k$ is as small as possible
among all such partitions into an odd number of segments.
{By our choice, $k$ is likely to be small in many cases.}
Assuming that the desired partition exists,
we consider the special case when {$n$ is odd and the optimal solution occurs only when each segment {consists of a single number.}}
Then the elements of the odd
segments $I_{2j-1}$ for $j=1\ldots,k+1$ and the elements of the
even segments $I_{2j}$ for $j=1,\ldots,k$ correspond to the two number
sequences that are
obtained from $a_1,\ldots, a_n$ by taking their terms alternately.
The question is whether number sequences $a_1,\ldots ,a_n$ exist for
 {which $k=\frac{n-1}{2}$ is the optimal solution.}

Our answer to this question is positive. Actually, we construct infinitely
many number sequences with this property (see   Proposition~\ref{path}).
Along a slightly different line, we can also ask a similar question for
cyclic
number sequences
$a_0,a_1,\ldots , a_{n-1},$ with the indices taken modulo $n$
(that is, $a_n=a_0$)
by modifying the problem into finding an even number of segments of subsequences
$I_1,\ldots, I_{2k}$ subject to the same requirements, where $k\ge 1$.
Our answer to the modified question is rather negative.
Indeed, we show that $k=1$ is optimal for any cyclic number sequence
(see Theorem~\ref{thm:cycle}).

In fact the above problems are related to safe set problems on weighted graphs.
We use \cite{cl} for graph terminology and notation not defined here.
Only finite, simple (undirected) graphs are considered. For a graph $G$, let
$\delta(G)$ be the minimum degree of $G,$ and let $G[S]$ denote the subgraph
of $G$ induced by {a} subset
$S \subset V(G).$ We often abuse/identify terminology and notation for
subsets of the vertex set and subgraphs induced by them. In particular, a
(connected) component is sometimes treated as a subset of the vertex set.
We let $k(G)$ denote the number of components of $G$.
When $A$ and $B$ are disjoint subsets of $V(G)$, the set of edges joining
a vertex of $A$ to a vertex of $B$ is denoted by $E_G(A,B)$. If there
is no confusion, we often denote this set by $E(A,B)$.
If $E(A,B)\neq\emptyset$, then $A$ and $B$ are said to be \textit{adjacent}.

A \textit{weight function} $w$ on $V(G)$ is a mapping associating
each vertex of $G$ with a positive real number.
Let $\mathcal{W}(G)$ be the set of all weight functions on $V(G)$.
For $w\in \mathcal{W}(G)$, we refer to $(G,w)$ as a \textit{weighted graph}.
For every subset $X$ of $V(G)$, let
$w(X)=\sum_{v \in X} w(v);$ here we also allow ourselves to use the notation $w(G[X])$ for $w(X)$.

The notion of a safe set was introduced by Fujita et al.\ \cite{SGS-safe-set}
{as} a variation of facility location problems. Bapat et al.\
\cite{BFLMMST-weighted-sf} extended it to weighted graphs.
Assume that $(G,w)$ is a weighted graph where $G$ is connected.
A non-empty subset $S \subset V(G)$ is a {\it weighted safe set} of $(G,w)$ if
for every component $C$ of $G[S]$ and every component $D$ of $G-S$, we have
$w(C) \geq w(D)$ whenever $E (C,D) \neq \emptyset$.
The \textit{weighted safe number} of $(G,w)$ is the minimum weight $w(S)$
among all weighted safe sets of $(G,w)$, that is,
$$\s(G,w)=\min\{ w(S) \mid S \text{ is a weighted safe set  of }(G,w)\}.$$
If $S$ is a weighted safe set of $(G,w)$ and $w(S)=\s(G,w)$,
then $S$ is called a {\it minimum weighted safe set\/}.
Restricting to connected safe sets, if  $S$ is a weighted safe set of
$(G,w)$ and $G[S]$ is connected, then $S$ is called a \textit{connected
weighted safe set} of $(G,w)$.
The \textit{connected weighted safe number} of $(G,w)$ is defined by
$$\cs(G,w)=\min\{ w(S) \mid S \text{ is a connected weighted safe set  of }(G,w)\},$$
and  a {\it minimum connected weighted safe set\/} is
a connected weighted safe set $S$ of $(G,w)$ such that $w(S)=\cs(G,w)$.
Throughout the paper, we will often omit `weighted', and simply speak of a safe set
or a connected safe set.

{For a disconnected graph $G$, we can define the notion of a (connected) safe set naturally by considering a (connected) safe set of each component. So, we always assume that every graph in this paper is connected unless otherwise specified.}

Recently, problems on safe sets in graphs have been extensively studied,
especially to investigate the algorithmic aspects.
Fujita et al.\ \cite{SGS-safe-set} showed that computing the connected
safe number in { a unweighted graph (i.e., $(G,w)$ with a constant weight function $w$)} is NP-hard in general,
whereas they constructed a linear time algorithm for computing the connected
safe number in unweighted trees. \'Agueda et al.\ \cite{ag} gave an efficient
algorithm for computing the safe number for unweighted trees.
Somewhat surprisingly, Bapat et al.\ \cite{BFLMMST-weighted-sf} showed that
computing the connected weighted safe number in a tree is NP-hard even if
the underlying tree is restricted to be a star, whereas they gave an
efficient algorithm computing the safe number for a weighted path.
More recently, Ehard and Rautenbach  \cite{ED-tree} provided
a polynomial-time approximation scheme (PTAS) for the connected safe number
of a weighted tree.

In this paper we focus on weighted graphs $(G,w)$ with  $\s(G,w) = \cs(G,w)$.
{Recall that, for any weighted graph $(G,w)$, we have
$\s(G,w) \le \cs(G,w) < 2\s(G,w)$, where the fist inequality is from definitions and the second inequality is obtained from the same method as in Proposition 2 of \cite{SGS-safe-set}.}
Intuitively, {like for the facility location problem}, if $(G,w)$ contains a minimum connected weighted safe set $S$
with  $|S|=\s(G,w)$, then one might feel that $G[S]$ plays a central role
in the graph. To see that this is reasonable, 
{consider a weighted graph $(G,w)$.}
When we regard $(G,w)$ as a kind of network, $G[S]$ has a majority weight
compared with other components in $G-S$ and the internal structure of $G[S]$
is rather stable because it is connected; thus, we can regard $G[S]$ as a
core part in the network in some sense.
From this viewpoint, if $G$ is
a graph such that $\s(G,w) = \cs(G,w)$ holds for a weight
$w\in \mathcal{W}(G)$, then we would choose a minimum safe set $S$ which induces a connected graph for efficiency and stableness.
Consequently we would like to investigate which kind of weighted graphs $(G,w)$
satisfy $\s(G,w) = \cs(G,w)$. We thus propose the following problems.

\begin{problem}\label{problem1:weights}
Given a graph $G$, determine the set $\W(G)$ of weights
$w$ such that $\s(G,w)=\cs(G,w)$.
\end{problem}

\begin{problem}\label{problem2:graphs}
Determine  the family $\G$   of graphs $G$ for which $\s(G,w)=\cs(G,w)$ for
every $w\in \mathcal{W}(G)$.
\end{problem}

If $G$ is a complete graph, then $\W(G)=\mathcal{W}(G)$ and hence $G\in \G$.
If $G$ is a path, then it is shown in \cite{SGS-safe-set} that any constant
function belongs to $\W(G)$.

Regarding Problem~\ref{problem2:graphs}, in addition to every complete graph
being in $\G,$ it is not difficult to check that any star graph is in $\G$;
indeed, we can even show the following.

\begin{prop}\label{n-1:vertex}
If $\Delta(G)=|V(G)|-1$, then $G\in \G$.
\end{prop}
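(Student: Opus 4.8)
The plan is to establish the nontrivial inequality $\cs(G,w)\le \s(G,w)$ for an arbitrary $w\in\mathcal{W}(G)$, since $\s(G,w)\le \cs(G,w)$ always holds by definition. Fix a vertex $v$ with $\deg(v)=|V(G)|-1$, so that $v$ is adjacent to every other vertex, and write $W=w(V(G))$. The engine of the whole argument will be a simple sufficient condition for a set to be a connected safe set: \emph{if $v\in T$ and $w(T)\ge W/2$, then $T$ is a connected safe set.} Indeed, $G[T]$ is connected because $v$ dominates $T$; every component $E$ of $G-T$ is adjacent to $T$ (again through $v$) and satisfies $w(E)\le w(V(G)\setminus T)=W-w(T)\le W/2\le w(T)$, which is exactly the required safe-set inequality. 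I would isolate this observation as a short preliminary claim.

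Next, take a minimum safe set $S$, so that $w(S)=\s(G,w)$, and split on the location of $v$. If $v\in S$, then $G[S]$ is connected (once more because $v$ dominates $S$), so $S$ is itself a connected safe set and we are done. If $v\notin S$, set $D=V(G)\setminus S$; since $v\in D$ dominates $D$, the graph $G-S$ has the single component $D$, which is adjacent to every component $C$ of $G[S]$, and hence safety of $S$ forces $w(C)\ge w(D)$ for each such $C$. Listing the components as $C_1,\dots,C_m$ with $w(C_1)\le\cdots\le w(C_m)$, the case $m=1$ is immediate, since then $S=C_1$ is already connected; so the real work is when $m\ge 2$.

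For $m\ge 2$ I plan to produce a connected safe set $T\ni v$ with $W/2\le w(T)\le w(S)$, which by the preliminary claim finishes the proof. The natural first attempt is $T=V(G)\setminus C_m$: here $w(T)=W-w(C_m)\le W-w(D)=w(S)$ because $w(C_m)\ge w(D)$, and the claim applies as soon as $w(C_m)\le W/2$. The main obstacle is precisely the possibility of a \emph{heavy} component with $w(C_m)>W/2$, for which this $T$ fails the weight threshold. To handle that regime I would instead take $T=\{v\}\cup C_m$: now $w(T)>W/2$ automatically, while $w(T)=w(v)+w(C_m)\le w(C_1)+w(C_m)\le w(S)$, using $w(v)\le w(D)\le w(C_1)$ together with $m\ge 2$. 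Combining the two subcases gives $\cs(G,w)\le w(T)\le w(S)=\s(G,w)$ in every case, and since $w$ was arbitrary we conclude $G\in\G$. The only delicate point to get right is the bookkeeping of the two weight estimates around the threshold $W/2$; everything else reduces to the domination property of $v$.
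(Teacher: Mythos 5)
Your proof is correct: the preliminary claim (any $T$ containing $v$ with $w(T)\ge W/2$ is a connected safe set) is sound, and both subcases of the weight estimate check out, since in the heavy case $w(v)\le w(D)\le w(C_1)$ and $m\ge 2$ give $w(\{v\}\cup C_m)\le w(C_1)+w(C_m)\le w(S)$. The overall strategy is the same as the paper's (use the dominating vertex to upgrade a disconnected minimum safe set to a connected one of no larger weight), but the construction differs. The paper avoids the $W/2$ threshold and the case split entirely: it takes $D$ to be a \emph{lightest} component of $G[S]$ and sets $S'=V(G)\setminus D$ in one stroke. Then $G[S']$ is connected via $v$, its unique complementary component is $D$ itself, and $w(S')\ge w(D)$ because $S'$ still contains some other component of $G[S]$, each of which weighs at least $w(D)$; finally $w(S')=W-w(D)\le W-w(G-S)=w(S)$ since $w(D)\ge w(G-S)$. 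So the paper gets a single uniform witness where you need two, at the cost of nothing; your version, on the other hand, isolates a reusable sufficient condition (the half-weight criterion for sets containing a dominating vertex) that is slightly more general than what this proposition needs. Both are valid; the paper's is shorter.
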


\begin{proof}[Proof of Proposition~\ref{n-1:vertex}]
Let $v$ be a vertex of degree $|V(G)|-1$ in $G$.
Let $w$ be a weight function on $V(G)$, and $S$ be a  minimum safe set of $(G,w)$.
Suppose that $G[S]$ is not connected. Then $v\not\in S,$ hence
$G-S$ contains $v$ and so is connected.
Let $D$ be a component
of $G[S]$
with the smallest weight, that is,
\[ w(D) =\min \{ w(C) \mid C\text{ is a component of }G[S] \}.\]
Let $S'=V(G)-D,$ then $G[S']$ is connected, since $v\in S'$.
Since $S'$ includes a component of $G[S]$ whose weight is not less than $w(D)$,
it follows that $w(D) \le w(S')$.
Thus $S'$ is a connected safe set of $(G,w)$.
Moreover, since $w(D) \ge w(G-S)$, $w(S')\le w(S)$. Thus  $w(S')= w(S)$.
Hence, $\s(G,w)=\cs(G,w)$.
\end{proof}

In view of Proposition~\ref{n-1:vertex}, one might ask if there exists
a graph $G$ with a low maximum degree such that $G\in \G$.
As we will observe later, this is not the case for paths.
In the following we show that such graphs do exist.
Namely, we prove the following theorem.

\begin{thm}\label{thm:cycle}
Any cycle belongs to $\G$.
\end{thm}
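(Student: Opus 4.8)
The plan is to fix a cycle $C$ and a weight $w\in\mathcal W(C)$, write $W=w(V(C))$, and exploit the very rigid shape that (connected) safe sets must have on a cycle. Deleting a nonempty proper vertex set $S$ splits the remaining vertices into arcs that alternate around the cycle, so $G[S]$ and $C-S$ have the same number $k$ of components, say $S_1,D_1,S_2,D_2,\dots,S_k,D_k$ read cyclically, where the $S_i$ are the components of $G[S]$ and the $D_i$ those of $C-S$. Since $S_i$ is adjacent to exactly $D_{i-1}$ and $D_i$, the set $S$ is safe if and only if $w(S_i)\ge w(D_{i-1})$ and $w(S_i)\ge w(D_i)$ for every $i$ (indices mod $k$). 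In particular a connected safe set is just an arc $A$ with $w(A)\ge W-w(A)$, that is $w(A)\ge W/2$, so $\cs(C,w)=W-M$ where $M=\max\{w(B): B\text{ is an arc with } w(B)\le W/2\}$. Summing $w(S_i)\ge w(D_i)$ over $i$ gives $w(S)\ge w(C-S)$, hence $w(S)\ge W/2$ for every safe set $S$.

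Because $\s\le\cs$ always holds, the whole theorem reduces to producing, from an arbitrary minimum safe set $S$, a connected safe set of weight at most $w(S)$. Writing $L=w(C-S)=\sum_i w(D_i)$, the bound $w(S)\ge W/2$ is exactly $L\le W/2$, and a connected safe set $A$ with $w(A)\le w(S)$ is the complement of an arc $B$ with $L\le w(B)\le W/2$. So the single statement to prove is that there is an arc whose weight lies in $[L,\,W/2]$ (equivalently $M\ge L$). First I would dispose of the case in which some block is already heavy: if $w(S_i)\ge W/2$ for some $i$, then the arc $S_i$ is itself a connected safe set with $w(S_i)\le w(S)$, and we are done; so I may assume every block has weight $<W/2$.

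For $k=2$ the interval is hit by a direct averaging argument that I would present as the model case. The four arcs $S_1\cup D_1$, $D_1\cup S_2$, $S_2\cup D_2$, $D_2\cup S_1$ each have weight at least $w(D_1)+w(D_2)=L$ (each uses one domination inequality, e.g. $w(S_1)\ge w(D_2)$), while the four weights sum to $2W$, so their average is $W/2$ and the lightest of them has weight in $[L,W/2]$; its complement is the desired connected safe set. The obstacle is that this clean averaging does not survive for general $k$: when $k\ge3$ no single block is forced to dominate a nonadjacent gap, and one can write down cyclic weight patterns in which every \emph{half-arc} either falls short of $L$ or overshoots $W/2$, so no fixed family of arcs can be controlled by counting alone. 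The heart of the proof must therefore use the separation of the gaps by dominating blocks rather than averaging.

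My plan for the general case is an induction on the number of components $k$ of a minimum safe set. I would take a minimum safe set $S$ realizing $\s(C,w)$ with $k$ as small as possible, assume $k\ge2$, and seek a safe set of no larger weight with fewer components, contradicting minimality of $k$. The natural move is to choose a lightest gap $D_m$ (so $w(D_m)=\min_i w(D_i)$) and merge the two flanking blocks, replacing $S$ by $S\cup D_m$: this is again safe, since the merged block $S_m\cup D_m\cup S_{m+1}$ still dominates its outer neighbours $D_{m-1},D_{m+1}$ via $w(S_m)\ge w(D_{m-1})$ and $w(S_{m+1})\ge w(D_{m+1})$, and it has $k-1$ components. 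The difficulty — and the step I expect to be the main obstacle — is that this merge raises the weight by $w(D_m)$, so it must be paid for by trimming the merged arc back toward $W/2$; controlling this trade-off is exactly where the domination inequalities $w(D_i)\le\min(w(S_i),w(S_{i+1}))$ and the minimality of $S$ must be combined, and it is what excludes the clustered configurations that defeat the averaging approach. Once the reduction to $k=1$ is achieved the minimum safe set is connected, giving $\cs(C,w)\le\s(C,w)$ and hence $\s(C,w)=\cs(C,w)$ for every $w$, so $C\in\G$. I would close by remarking that this is precisely the claim, in the cyclic zigzag reformulation of the introduction, that the optimum is attained at $k=1$.
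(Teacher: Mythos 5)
Your setup is correct (the alternating arc structure, the characterization of safe sets on a cycle, the reduction to finding an arc of weight in $[L,W/2]$ where $L=w(C-S)$, and the $k=2$ averaging argument all check out), but the proposal stops exactly where the proof has to start: the inductive step for $k\ge 3$ is announced, identified as ``the main obstacle,'' and never carried out. Merging across a lightest gap $D_m$ produces a safe set $S\cup D_m$ with $k-1$ components but weight $w(S)+w(D_m)>w(S)$, and ``trimming the merged arc back toward $W/2$'' is not a well-defined operation that preserves safety: removing vertices from the ends of $S_m\cup D_m\cup S_{m+1}$ can destroy the domination inequalities against $D_{m-1}$ and $D_{m+1}$, and nothing in your argument bounds the trimmed weight by $w(S)$. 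Since you give no mechanism for the trade-off, the induction does not close and the theorem is not proved.

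For comparison, the paper avoids this induction on $k$ entirely. It takes a shortest counterexample cycle and shows (by contracting each component $X_i$ to a single vertex, obtaining a shorter cycle to which minimality applies) that every component of $G[S]$ and of $C-S$ may be assumed to be a single vertex. In that singleton situation each vertex of $S$ dominates \emph{both} of its neighbours, which yields the key inequality $w(S)-w(V\setminus S)\ge \max(w)-\min(w)$ by a telescoping sum up to a maximum-weight vertex. The connected safe set is then found among the $2m$ arcs $I_i$ of exactly $m$ consecutive vertices: choosing $r$ with $w(I_r)\le w(V)/2\le w(I_{r+1})$, the differences $w(I_{r+1})-w(I_{r+m+1})$ and $w(I_{r+m})-w(I_r)$ sum to $2(w(u_{r+m})-w(u_r))\le 2(\max(w)-\min(w))$, so the smaller one is at most $\max(w)-\min(w)\le w(S)-w(V\setminus S)$, giving a connected safe arc of weight at most $w(S)$. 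So a ``fixed family of arcs'' argument does work after all, but only after the reduction to singleton components and only when combined with the $\max(w)-\min(w)$ bound --- two ingredients absent from your plan. If you want to salvage your approach, you would need either to supply the trimming argument in full or to adopt the paper's contraction step, which replaces the problematic induction on $k$ by an induction on the length of the cycle.
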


 In fact, cycles are the unique non-complete graphs for which a minimum
safe set always contains at least half the total weight
of the graph.
Through the study of the weighted safe set problem for cycles, we found
several equivalent conditions for a graph to be a cycle or a complete graph.
In particular, one of them sheds light on the study of subgraph component
polynomials. Inspired by the study of {the} community structure in connection
networks, Tittmann et al.\ \cite{TAM} introduced a new type of graph
polynomial.  For a graph $G$ and two positive integers $i$ and $j$, let
\[q_{i,j}(G)=| \{X \subset V(G) \mid |X|=i \text{ and }k(G[X])=j \}|.\]
The \textit{subgraph component  polynomial} $Q(G;x,y)$ of $G$ is the
polynomial in two variables $x$ and $y$ such that the coefficient of
$x^i y^j$ is $q_{i,j}(G)$.
From the definition, it is easy to check that  $q_{1,1}(G)=|V(G)|$,
$q_{2,1}(G)=|E(G)|$, and $q_{2,2}(G)={n\choose 2}-|E(G)|$. {In addition, $q_{1,1}(G)=q_{n-1,1}(G)$  is equivalent to the statement that $G$ is 2-connected.}
Our result is the following.

\begin{thm}\label{prop:ratio}
Let $G$ be a connected graph with $n$ vertices, where $n\ge 5$.
The following are equivalent:
\begin{itemize}
\item[\rm(i)] $G$ is either a complete graph or a cycle;
\item[\rm(ii)] $\s(G,w)\ge \frac{w(G)}{2}$ for every $w\in \mathcal{W}(G)$;
\item[\rm(iii)] $G-\{u,v\}$ is disconnected for any two nonadjacent vertices
$u$ and $v$;
\item[\rm(iv)] $q_{1,1}(G)=q_{n-1,1}(G)$  and
$q_{2,1}(G)=q_{n-2,1}(G)$. 
\end{itemize}
\end{thm}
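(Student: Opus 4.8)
The plan is to treat (i) as a hub: I will show that (i) implies each of (ii), (iii), (iv) directly, and that each of (ii), (iii), (iv) implies (i). The three implications out of (i) are routine, so the real content lies in the converses, i.e.\ in the structural fact that a connected graph on at least five vertices that is neither complete nor a cycle must violate each of (ii), (iii), (iv).

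First the easy directions. For (i) $\Rightarrow$ (ii): if $G=K_n$ then $G[S]$ and $G-S$ are each connected, so safety forces $w(S)\ge w(G-S)$, whence $w(S)\ge w(G)/2$; if $G=C_n$, list the components of $G[S]$ and of $G-S$ as arcs $C_1,D_1,C_2,D_2,\dots$ around the cycle, observe that each $D_i$ is bounded by an adjacent arc $C_i$ through the safety condition, and sum to get $w(G-S)\le w(S)$ (this is the half-weight property underlying Theorem~\ref{thm:cycle}). For (i) $\Rightarrow$ (iii): complete graphs have no nonadjacent pair, and two nonadjacent vertices of a cycle split it into two nonempty arcs, so $G-\{u,v\}$ is disconnected. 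For (i) $\Rightarrow$ (iv): a direct count gives $q_{n-1,1}(K_n)=n=q_{1,1}$ and $q_{n-2,1}(K_n)=\binom n2=q_{2,1}$, while for $C_n$ one gets $q_{n-1,1}=n=q_{1,1}$ and $q_{n-2,1}=n=q_{2,1}$, since removing two vertices of a cycle keeps it connected exactly when the two are adjacent.

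The heart of the argument is the converse to (iii): if $G-\{u,v\}$ is disconnected for every nonadjacent pair $\{u,v\}$, then $G$ is complete or a cycle. I would first clear two reductions. If some vertex $x$ has degree one with neighbour $y$, then $G-x$ is connected but, by (iii), every vertex of $G-x$ other than $y$ is a cut vertex of $G-x$; since a connected graph on at least four vertices has at least two non-cut vertices, this is impossible, so $\delta(G)\ge 2$. If $G$ is not complete yet has a universal vertex $x$, then any nonadjacent pair avoids $x$ and $G-\{u,v\}$ still contains $x$ adjacent to all remaining vertices, hence is connected, contradicting (iii); thus $\Delta(G)\le n-2$. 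The remaining task---and the main obstacle---is to upgrade $\delta(G)\ge 2$ to $2$-regularity: assuming a vertex of degree at least three, I must derive a contradiction with (iii) through a careful analysis of how deleting a maximum-degree vertex together with one of its non-neighbours can disconnect the graph. Once $G$ is shown to be $2$-regular and connected it is a cycle, completing (iii) $\Rightarrow$ (i) and hence (i) $\Leftrightarrow$ (iii).

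With (i) $\Leftrightarrow$ (iii) available, the last two converses are short. For (ii) $\Rightarrow$ (i) I argue contrapositively: if $G$ is neither complete nor a cycle then (iii) fails, so there is a nonadjacent pair $\{u,v\}$ with $S:=G-\{u,v\}$ connected; assigning total weight $1$ to $S$ and weight $1$ to each of $u$ and $v$ makes $S$ a safe set with $w(S)=1$ while $w(G)=3$, so $\s(G,w)\le 1<w(G)/2$ and (ii) fails. (The same device works for any \emph{connected cutset}: a connected $S$ with $G-S$ having $t\ge 2$ components gives $\s(G,w)\le w(S)<w(G)/2$ once each component is set to weight $w(S)$.) For (iv) $\Rightarrow$ (i) I would use that $q_{1,1}=q_{n-1,1}$ is precisely $2$-connectivity, and then prove, for $2$-connected graphs, the inequality $q_{n-2,1}(G)\ge q_{2,1}(G)$---that the number of pairs whose removal keeps $G$ connected is at least $|E(G)|$, with equality only for cycles and complete graphs---so that the second equality in (iv) forces the equality case. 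Writing $|Ac|,|Ad|,|Nc|,|Nd|$ for the numbers of adjacent/nonadjacent pairs with connected/disconnected removal, this inequality reads $|Nc|\ge |Ad|$, and establishing it with its equality characterization for $2$-connected graphs is the second main obstacle; since the equality case coincides with (i), I expect to derive it from the same classification used for (iii) $\Rightarrow$ (i) rather than independently.
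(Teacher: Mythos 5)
Your hub structure around (i) and all of the easy implications out of (i), as well as the weight construction for (ii) $\Rightarrow$ (i) (which is exactly the paper's Lemma~\ref{lem:P3} specialized to $p=q=1$), are fine. But the proposal leaves unproved precisely the two steps that carry all the difficulty, and in both cases you explicitly flag them as ``obstacles'' without resolving them, so this is a plan rather than a proof.

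First, for (iii) $\Rightarrow$ (i) you reduce to $\delta(G)\ge 2$ and $\Delta(G)\le n-2$ and then announce that you ``must derive a contradiction'' from a vertex of degree at least three; no argument is given, and it is not at all clear that staring at a maximum-degree vertex and one non-neighbour will produce one. The paper avoids degree counting entirely: it takes a spanning tree $T$ of maximum diameter, uses (iii) to force any two pendant vertices of $T$ to be adjacent in $G$ (since deleting two leaves of a spanning tree leaves it connected), concludes $T$ is a Hamiltonian path and hence $G$ has a Hamiltonian cycle $C$, and then shows that any chord of $C$ propagates (via (iii)) to force all cross pairs between the two arcs to be edges, whence $G$ is complete. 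You need some argument of comparable substance here.

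Second, for (iv) $\Rightarrow$ (i) you correctly reduce (iv) to $2$-connectivity plus $|N_c(G)|=|E_d(G)|$ (in your notation $|Nc|=|Ad|$), but the inequality $|N_c(G)|\ge |E_d(G)|$ with strictness whenever $E_d(G)\neq\emptyset$ is exactly the hard combinatorial content, and your stated intention to ``derive it from the same classification used for (iii) $\Rightarrow$ (i)'' is circular: that classification takes (iii), i.e.\ $N_c(G)=\emptyset$, as a hypothesis, so it says nothing about a general $2$-connected graph with $E_d(G)\neq\emptyset$. The paper proves the strict inequality by an iterative peeling argument: it repeatedly chooses an edge $u_iv_i\in E_d(G_{i-1})$ cutting off a smallest component $C_i$, sets $G_i=G_{i-1}-C_i$, shows each $G_i$ stays $2$-connected, that the surviving disconnecting edges are preserved (so the process runs for exactly $p=|E_d(G)|$ steps), and that each step contributes a nonempty set $N_i$ of new non-edges in $N_c(G)$, pairwise disjoint, with $|N_1|\ge 2$ (this is where $n\ge 5$ enters), giving $|N_c(G)|\ge p+1>|E_d(G)|$. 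Nothing in your proposal substitutes for this.
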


Tittmann et al.\ gave a few examples of graphs and graph families that
are determined by $Q(G;x,y)$ and, in view of the importance of the
study of {the} communication structure in networks, they proposed as an open
problem to find more classes of graphs that are determined by $Q(G;x,y)$
(see Problem 35 in \cite{TAM}). Our result
contributes to a solution of their open problem.

Our result suggests a deep relationship between safe numbers and
subgraph component polynomials.
Consequently we believe that Problem~\ref{problem2:graphs} is also
important in the study of communication structure in networks. As an
initial step to approach this challenging problem we consider some
basic properties of $\G$ from a variety of viewpoints.

For a vertex $v$ of degree two in $G$ which is not on a triangle,
\textit{suppression} of $v$ is the operation of removing $v$ and adding an
edge between the two neighbors of $v$.
This is the reverse operation of \textit{subdivision}; the
subdivision of an edge $e=xy$ yields a new graph containing one
new vertex $v,$ and having an edge set in which $e$ is replaced by two new edges
$xv$ and $vy$.
  We have the following result.

\begin{thm}\label{prop:subdivision} The family $\G$ is closed under suppression.
\end{thm}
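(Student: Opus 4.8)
The plan is to fix an arbitrary weight $w'\in\mathcal{W}(G')$ and prove $\s(G',w')=\cs(G',w')$; since $\s(G',w')\le\cs(G',w')$ always holds, it suffices to establish the reverse inequality. Write $v$ for the suppressed vertex and $x,y$ for its two neighbours in $G$, so that $xy\notin E(G)$ but $xy\in E(G')$. For $\epsilon>0$ define the lifted weight $w_\epsilon\in\mathcal{W}(G)$ by $w_\epsilon(u)=w'(u)$ for $u\ne v$ and $w_\epsilon(v)=\epsilon$. Because $G\in\G$ we have $\s(G,w_\epsilon)=\cs(G,w_\epsilon)$ for every $\epsilon>0$. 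The heart of the argument is to show
\[
\lim_{\epsilon\to0^+}\s(G,w_\epsilon)=\s(G',w')
\quad\text{and}\quad
\lim_{\epsilon\to0^+}\cs(G,w_\epsilon)=\cs(G',w');
\]
granting these, the identity $\s(G,w_\epsilon)=\cs(G,w_\epsilon)$ passes to the limit and yields $\s(G',w')=\cs(G',w')$, so that $G'\in\G$.

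Each limit is obtained from two one-sided estimates, a \emph{projection} bound and a \emph{lifting} bound, both controlled up to an additive error of size $O(\epsilon)$. For the projection bound (which gives ``$\ge$''), I start from an optimal (connected) safe set $S$ of $(G,w_\epsilon)$ and pass to $S'=S\setminus\{v\}\subseteq V(G')$. When $v\notin S$ this is transparent: inserting the edge $xy$ can only merge components of $G[S]$, and enlarging an induced component never destroys a safe inequality, while deleting $v$ from $G-S$ can only lighten the complementary components; hence $S'=S$ is a (connected) safe set of $(G',w')$ of the same weight. When $v\in S$ one has $w'(S')=w_\epsilon(S)-\epsilon$, and a component on the $S$-side can lose weight $\epsilon$, so a previously tight inequality may fail by at most $\epsilon$ — exactly the kind of discrepancy the limit will absorb.

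For the lifting bound (which gives ``$\le$'') I start from an optimal (connected) safe set $S'$ of $(G',w')$ and build a (connected) safe set of $(G,w_\epsilon)$ by deciding on which side of the cut to place $v$. The decision is dictated by how $S'$ meets $\{x,y\}$. If both $x,y\in S'$, or if exactly one of them lies in $S'$, I place $v$ \emph{inside} $S$, so that $v$ is absorbed into the (heavier, safe) component carrying $x$ or $y$: this reproduces the role of the edge $xy$, keeps $G[S]$ connected when it must be, leaves $G-S=G'-S'$ unchanged, and raises the weight by exactly $\epsilon$, so safety is preserved. The remaining possibility is that both $x$ and $y$ lie outside $S'$, in a common component $D'$ of $G'-S'$ (they share the edge $xy$); then $v$ is forced into $D'$, which gains weight $\epsilon$, and a constraint $w'(C)=w'(D')$ that was tight for some adjacent component $C$ of $G'[S']$ is violated by $\epsilon$.

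This last configuration is the main obstacle. I expect to dispose of it in one of two ways: either by showing that among the optimal sets of $(G',w')$ one may always choose a representative meeting $\{x,y\}$, so that the clean lift of the previous paragraph applies, or, failing that, by observing that the only defect is the single $\epsilon$-sized violation at the pair $(C,D')$, so that after an $O(\epsilon)$ local repair a safe set of $(G,w_\epsilon)$ of weight at most $w'(S')+O(\epsilon)$ still exists. In either case the one-sided estimates read $\s(G',w')\le\s(G,w_\epsilon)+O(\epsilon)$ and $\s(G,w_\epsilon)\le\s(G',w')+O(\epsilon)$, and likewise for $\cs$; letting $\epsilon\to0^+$ gives the two displayed limits and finishes the proof. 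The hypothesis that $v$ has degree two and is \emph{not} on a triangle enters precisely here: it guarantees $xy\notin E(G)$, so that routing the connection between $x$ and $y$ through the single new vertex $v$ is the only change to the cut structure and no triangle is created that could disturb the component/weight bookkeeping.
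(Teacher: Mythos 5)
Your overall strategy---lift $w'$ to a weight on $G$ giving the suppressed vertex $v$ a tiny weight $\epsilon$, use $G\in\G$ there, and transfer optimal sets back and forth---is the same as the paper's, but the two $O(\epsilon)$ estimates you rely on are precisely where the argument has genuine gaps, and the limit $\epsilon\to 0^+$ cannot absorb them. For the projection bound: when $v\in S$, the set $S\setminus\{v\}$ may violate a safe-set inequality by $\epsilon$; a set that violates the inequality by \emph{any} positive amount is simply not a safe set, so it gives no upper bound on $\s(G',w')$ whatsoever, and you have not exhibited any actual safe set of $(G',w')$ of weight at most $\s(G,w_\epsilon)+O(\epsilon)$. (The function $w\mapsto \s(G',w)$ is not continuous---the collection of safe sets changes discontinuously as constraints cross tightness---so ``passing to the limit'' is not automatic.) The missing ingredient, which the paper supplies, is that $(G',w')$ has only finitely many vertex subsets, hence a minimum positive violation $\beta$ over all non-safe sets; once $\epsilon<\beta/2$, ``fails by at most $\epsilon$'' forces ``does not fail at all.'' You never introduce such a constant.

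For the lifting bound, the case you yourself flag as the main obstacle ($x,y\notin S'$ lying in a common component $D'$ with a tight constraint $w'(C)=w'(D')$) is not disposed of by either of your suggestions: a minimum safe set of $(G',w')$ need not meet a prescribed edge $\{x,y\}$ (already on a cycle with constant weights a minimum safe set avoids some edge), and any ``local repair'' that adds a vertex to $S'$ costs at least $\min_{u}w'(u)$, a constant independent of $\epsilon$, not $O(\epsilon)$. The paper's resolution is different and essential: it defines the lifted weight by $w^*(v)=\epsilon$ \emph{and} $w^*(x)=w'(x)-\epsilon$, so that the correspondence sending $X$ to $X\cup\{v\}$ when $x\in X$ and to $X$ otherwise preserves weights and component adjacencies exactly in both directions; in your problematic case $x\in D'$, so $D'\cup\{v\}$ has weight exactly $w'(D')$ and nothing breaks. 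Combining that weight shift with the choice $\epsilon<\min\{\alpha,\beta\}/2$, the paper closes the argument as an exact contradiction rather than a limit. Your proposal needs both of these ingredients before it becomes a proof.
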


In contrast to the family $\G$ of graphs, one might consider another
family of graphs that is in a certain sense very far from $\G.$
A family $\mathcal{G}$ of graphs is \textit{safe-finite} if
$f(\mathcal{G}) < \infty,$ where
\begin{center}
$f(\mathcal{G}) = \max_{G\in \mathcal{G}, w\in \mathcal{W}(G) }
\min \{k(G[S])\mid S \text{ is a minimum weighted safe set of } (G,w)\}.$
\end{center}
Conversely, $\mathcal{G}$ is \textit{safe-infinite} if it is not safe-finite.
Obviously any finite family $\mathcal{G}$ of graphs is safe-finite.
The function $f$ is a mapping from a safe-finite family
of graphs to a positive integer and, in particular,
 $\G$ is the maximal family $\mathcal{G}$ of graphs such that $f(\mathcal{G})=1.$
It would seem an interesting problem to discover what kind of families
of graphs are safe-(in)finite.
Considering the set of all complete bipartite graphs with a constant weight on
the vertices, we observe that there exists a safe-infinite family of graphs.
To present another example, we provide the following theorem.

\begin{thm}\label{oddpath}
 The family of paths with an odd number of vertices is safe-infinite.
\end{thm}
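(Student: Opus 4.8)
The plan is to reduce the statement to the combinatorial number-sequence (``zigzag'') problem of the introduction and then to feed in the extremal sequences of Proposition~\ref{path}. Write $n=2k+1$ and let $v_1,\dots,v_n$ be the vertices of $P_n$ in path order, with $a_i=w(v_i)$. The first step is to set up a dictionary between safe sets and segment partitions: any $S\subseteq V(P_n)$ with $v_1,v_n\notin S$ cuts the sequence $a_1,\dots,a_n$ into an odd number of consecutive segments $I_1,\dots,I_{2m+1}$ in which the odd-indexed segments are exactly the components of $P_n-S$ and the even-indexed segments are exactly the components of $P_n[S]$. Under this dictionary, $S$ is a safe set if and only if $\max\{s(I_{2j-1}),s(I_{2j+1})\}\le s(I_{2j})$ for all $j$, one has $w(S)=\sum_{j=1}^{m}s(I_{2j})$, and $k(P_n[S])=m$. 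Hence minimizing $w(S)$ over safe sets avoiding both endpoints is precisely the primary zigzag objective, and the component count $k(P_n[S])$ of such a minimizer is exactly the parameter controlled by the secondary objective.

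With the dictionary in place, the second step is to invoke Proposition~\ref{path}: it supplies, for infinitely many odd $n$, a positive weight sequence for which the minimum value of $\sum_{j} s(I_{2j})$ is attained only by the all-singletons partition, i.e.\ $I_i=\{a_i\}$ for every $i$. Translating back, for each such $n$ the unique minimum safe set of $(P_n,w)$ avoiding the endpoints is the set of even-indexed vertices $\{v_2,v_4,\dots,v_{2k}\}$, which induces $k=\frac{n-1}{2}$ isolated vertices. The one point requiring care is that a minimum safe set might a priori contain an endpoint $v_1$ or $v_n$; I would rule this out by a direct comparison showing that for these weights no safe set meeting $\{v_1,v_n\}$ is lighter than the interior all-singletons set (equivalently, that one may always move the optimum off the endpoints without increasing its weight). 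Granting this, every minimum safe set of $(P_n,w)$ equals $\{v_2,\dots,v_{2k}\}$, so $\min\{k(P_n[S])\mid S\text{ a minimum safe set of }(P_n,w)\}=\frac{n-1}{2}$.

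Finally, since $\frac{n-1}{2}\to\infty$ as $n\to\infty$ through this infinite family, the quantity $f$ of the definition is unbounded on the family of odd paths, which is exactly the assertion that this family is safe-infinite. I expect the genuine obstacle to lie entirely inside Proposition~\ref{path}, namely in engineering weights that force the all-singletons partition to be the \emph{unique} minimizer for arbitrarily large $n$. This is delicate because the requirement that each even segment dominate its neighbors already forces the even-indexed weights to carry at least about half of the total weight $w(P_n)$, whereas a single ``wide'' safe set consisting of one central block dominating the two end blocks can be made to weigh only about $w(P_n)/3$ when the weights are comparable; defeating such few-component competitors simultaneously for all large $n$ is what forces the carefully tuned (and necessarily unbounded-ratio) weight pattern built in Proposition~\ref{path}. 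By contrast, the reduction above and the endpoint check are then routine by comparison.
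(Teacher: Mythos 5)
Your proposal is correct and follows essentially the same route as the paper: Theorem~\ref{oddpath} is deduced directly from the construction of Proposition~\ref{path}, whose weights force the unique minimum safe set of $(P_{2n+1},w)$ to be the $n$ isolated even-indexed vertices, so $f$ is unbounded on the family of odd paths. The endpoint check you defer is already subsumed by the statement of Proposition~\ref{path}, which asserts uniqueness among \emph{all} safe sets (not only those avoiding $v_1$ and $v_{2n+1}$), so no extra comparison is needed and the zigzag-partition dictionary is an optional gloss.
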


This paper is organized as follows. In Section~\ref{sec:path},we give a construction on weighted paths $(P,w)$ {such that every minimum safe set has at least $N$ components for an arbitrary taken positive integer $N$,} 
thereby proving Theorem~\ref{oddpath}. We also prove Theorem~\ref{prop:subdivision}
in this section. We prove Theorems~\ref{thm:cycle} and~\ref{prop:ratio} in
Sections~\ref{sec:cycle} and~\ref{new}, respectively.
We give some remarks on Theorems~\ref{thm:cycle} and~\ref{prop:ratio} in
Section~\ref{sec:open}.

\section{Weighted safe sets of paths}\label{sec:path}
Subsection~\ref{subsec:path} gives the construction of a
weighted path $(P,w)$ with {an odd number of vertices} such that any minimum safe set has exactly
$\lfloor|V(P)|/2\rfloor$ components. This implies Theorem 1.7.
Further, {in Subsection~\ref{subsec:subdivision},} we give a construction,
using  subdivisions, of weight functions $w$ of $P$ for any path $P$ so that $w\not\in \W(P)$.

\subsection{Construction of weight functions on a  path of odd order}\label{subsec:path}
Throughout the subsection, let $n\ge 2$ be an integer, $P: v_1v_2\ldots v_{2n+1}$ be a path with $2n+1$ vertices.
Fix two positive real numbers $a$ and $b$ so that  \begin{eqnarray}\label{condition:ab}
2b>3a\qquad\text{ and }\qquad 2a>b>a.
\end{eqnarray}
We define a weight function  $w$ on $V(P)$ by (See Figure~\ref{fig:path}.)
 \begin{eqnarray}\label{def:weight}
 w(v_j)=\begin{cases} b & \text{ if }j\in\{1,2\}  \\
  2^{i-1}a  &\text{ if }j\ge 3\text{ and }j\in\{2i,2i+1\}\text{ for some }i\in\{1, \ldots,n\}.\\
  \end{cases}\end{eqnarray}

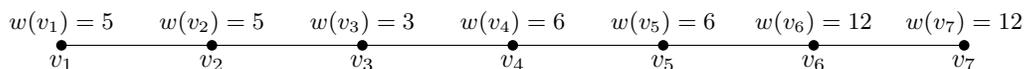
\begin{figure}[h!] \centering
\begin{tikzpicture}
    \path
        (0,0) coordinate (1)
        (2,0) coordinate (2)
        (4,0) coordinate (3)
        (6,0) coordinate (4)
        (8,0) coordinate (5)
        (10,0) coordinate (6)
        (12,0) coordinate (7);
\fill   (1) node[above]{\footnotesize$w(v_1)=5$}   (2) node[above]{\footnotesize$w(v_2)=5$}
(3) node[above]{\footnotesize$w(v_3)=3$}   (4) node[above]{\footnotesize$w(v_4)=6$}
(5) node[above]{\footnotesize$w(v_5)=6$}   (6) node[above]{\footnotesize$w(v_6)=12$}
(7) node[above]{\footnotesize$w(v_7)=12$};	
\fill   (1) node[below]{\small$v_1$}   (2) node[below]{\small$v_2$}
(3) node[below]{\small$v_3$}   (4) node[below]{\small$v_4$}
(5) node[below]{\small$v_5$}   (6) node[below]{\small$v_6$}
(7) node[below]{\small$v_7$};
	\path (1) edge (2);
	\path (2) edge (3);
	\path (3) edge (4);
	\path (4) edge (5);
	\path (5) edge (6);
	\path (6) edge (7);
    \fill
        (1) circle (2pt)
        (2) circle (2pt)
        (3) circle (2pt)
        (4) circle (2pt)
        (5) circle (2pt)
 (6) circle (2pt)
               (7) circle (2pt);
	\end{tikzpicture}
\caption{$(P,w)$ when $n=3$ and $a=3,b=5.$}\label{fig:path}
\end{figure}

We let $S=\{v_2,v_4,\ldots, v_{2n}\}$.
Since $w(v_{1})=w(v_{2}) >w(v_{3})$  and $w(v_{2i-1})<w(v_{2i})=w(v_{2i+1})$
for all $i\in\{2, \ldots,n\}$, it is easy to see that $S$ is a safe set of $(P,w)$.
Actually, $S$ is the unique minimum  safe set.

\begin{prop}\label{path}
Let $(P,w )$ be a  weighted path with a  weight function defined as
in \eqref{def:weight}.
Then $S=\{v_2,v_4,\ldots, v_{2n}\}$ is the unique minimum safe set of $(P,w )$.
\end{prop}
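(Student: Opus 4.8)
The plan is to reduce the claim to a single clean inequality and then induct. Since $w(v_1)=w(v_2)>w(v_3)$ and $w(v_{2i-1})<w(v_{2i})=w(v_{2i+1})$ for $i\ge 2$, the set $S=\{v_2,v_4,\dots,v_{2n}\}$ is a safe set, as already observed, and $w(S)=b+a\sum_{i=2}^{n}2^{\,i-1}=b+(2^{n}-2)a$, while $w(P)=2b+(2^{n+1}-3)a$. Thus the proposition is equivalent to the statement that \emph{every} safe set $T$ satisfies $w(T)\ge b+(2^{n}-2)a$, with equality only for $T=S$ (equivalently, the complementary components carry total weight at most $b+(2^{n}-1)a$). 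I would prove this by induction on $n$, exploiting the self-similarity built into \eqref{def:weight}: deleting the two heaviest vertices $v_{2n},v_{2n+1}$ leaves the path $v_1\cdots v_{2n-1}$, whose inherited weights are exactly those of the instance $(P_{n-1},w_{n-1})$, whose minimum safe set is $S_{n-1}=\{v_2,\dots,v_{2n-2}\}$ by the induction hypothesis. The small cases $n\le 2$ are checked directly.

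For the inductive step I take an arbitrary safe set $T$ and analyse the last two vertices. If $v_{2n}$ and $v_{2n+1}$ lie in a common component of $T$, or in a common component of $P-T$, then the rightmost peak, respectively the rightmost valley together with the peak dominating it, has weight at least $2w(v_{2n})=2^{n}a$; since $b<2a$ from \eqref{condition:ab} gives $2^{n}a>b+(2^{n}-2)a=w(S)$, every such $T$ is strictly heavier than $S$. Otherwise exactly one of $v_{2n},v_{2n+1}$ lies in $T$. Suppose first $v_{2n+1}\in T$ (so $v_{2n}\notin T$); the component of $P-T$ containing $v_{2n}$ is dominated on its right by the singleton peak $\{v_{2n+1}\}$ of weight $2^{n-1}a$, yet it has weight at least $w(v_{2n})=2^{n-1}a$, so it equals $\{v_{2n}\}$ and $v_{2n-1}\in T$. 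Then $T':=T\setminus\{v_{2n+1}\}$ is a safe set of $(P_{n-1},w_{n-1})$ containing the odd-indexed vertex $v_{2n-1}$, and $w(T)=w(T')+2^{n-1}a\ge w(S_{n-1})+2^{n-1}a=w(S)$; equality would force $T'=S_{n-1}$, which omits $v_{2n-1}$, a contradiction, so $w(T)>w(S)$.

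The remaining, and decisive, situation is $v_{2n}\in T$, $v_{2n+1}\notin T$. If moreover $v_{2n-1}\notin T$, then $\{v_{2n}\}$ is an isolated component of $P[T]$, the restriction $T':=T\setminus\{v_{2n}\}$ is a safe set of $(P_{n-1},w_{n-1})$, and $w(T)=w(T')+2^{n-1}a\ge w(S)$ with equality exactly when $T'=S_{n-1}$, i.e.\ when $T=S$; this is the case that produces the unique minimiser. The hard case, which I expect to be the main obstacle, is $v_{2n-1},v_{2n}\in T$: here the rightmost peak $C_p=\{v_m,\dots,v_{2n}\}$ has at least two vertices, but the doubling only yields $w(v_{2n-1})+w(v_{2n})=3\cdot 2^{n-2}a<2^{n}a$, so the crude bound of the second paragraph fails, and deleting $v_{2n}$ need not leave a safe set of $P_{n-1}$ (shrinking $C_p$ can destroy the inequality with the valley on its left).

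My plan to dispatch this last case is a secondary reduction to the \emph{left} sub-path $v_1\cdots v_{m-1}$: the set $T\cap\{v_1,\dots,v_{m-1}\}$ is always a safe set there, because the valley immediately left of $C_p$ loses only its right-hand constraint when $C_p$ is removed. Combining the induction hypothesis applied to this prefix with the exact value of $w(C_p)=\sum_{j=m}^{2n}w(v_j)$ and the arithmetic constraints $2b>3a$ and $a<b<2a$ of \eqref{condition:ab} should force $w(T)>w(S)$; the point is that carrying the heavy block $\{v_{2n-1},v_{2n}\}$ inside one component wastes at least $w(v_{2n-1})=2^{n-2}a$ relative to $S$. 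One technical nuisance is that the prefix $v_1\cdots v_{m-1}$ may have even order, so this step will likely require a companion estimate for even-order prefixes, or equivalently a correspondingly strengthened induction hypothesis. Once this case is settled, the four cases together establish both minimality and uniqueness, completing the induction.
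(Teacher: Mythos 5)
Your Cases 1--3 are sound (the component of $P-T$ containing $v_{2n}$ being squeezed between $w(v_{2n})=2^{n-1}a$ below and the singleton peak $\{v_{2n+1}\}$ above is a nice observation, and the truncation $T\mapsto T\setminus\{v_{2n}\}$ or $T\setminus\{v_{2n+1}\}$ does preserve safeness since every component/adjacency constraint of the truncated instance is already a constraint of the original). But Case 4 ($v_{2n-1},v_{2n}\in T$, $v_{2n+1}\notin T$), which you yourself flag as the main obstacle, is a genuine gap, and it is larger than a ``technical nuisance.'' Your plan needs a lower bound on the minimum safe-set weight of the prefix $v_1\cdots v_{m-1}$, but that prefix is in general \emph{not} an instance of \eqref{def:weight}: it may have even order, or be too short for the induction ($n\ge 2$ is assumed throughout). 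The companion lemma you would need is not a routine variant, because the even-order prefixes behave differently: for the $6$-vertex prefix with weights $b,b,a,2a,2a,4a$ the minimum safe set is $\{v_3,v_4,v_5\}$ of weight $5a$, and for the $4$-vertex prefix it is $\{v_2,v_3\}$ of weight $b+a$ --- in neither case the alternating set, and in neither case given by the formula $b+(2^k-2)a$. So you would have to determine these minima from scratch (essentially re-solving the problem on a different weight sequence) and then re-verify the arithmetic $w(T_Q)+w(C_p)>w(S)$ for each prefix length. Moreover, in the sub-case where the prefix part of $T$ is empty (e.g.\ $T=\{v_3,\dots,v_{2n}\}$ with $n=2$), the weight count alone gives $w(T)=3a<b+2a=w(S)$, and you must instead rule $T$ out via the safety constraint $w(C_p)\ge w(\{v_1,v_2\})=2b$ against $2b>3a$; your sketch gestures at \eqref{condition:ab} but does not carry out any of this.

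For contrast, the paper sidesteps the truncation induction entirely. It first records the single global bound $\s(P,w)\le w(S)<2^na$ (using $b<2a$), and then proves by downward induction on $i$ that any minimum safe set $X$ contains \emph{exactly one} of $\{v_{2i},v_{2i+1}\}$ for every $i\ge 2$: two-in or two-out would create a component of weight at least $2^ia$ sitting on top of the already-accounted weight $(2^n-2^i)a$ from the pairs to its right, breaching the $2^na$ bound. Because the two vertices in each pair have equal weight, this pins down $w(X\cap\{v_4,\dots,v_{2n+1}\})=2^na-2a$ exactly without yet knowing \emph{which} vertex of each pair is chosen; the left end $\{v_1,\dots,v_5\}$ and then the exact identity of each chosen vertex are settled afterwards by short local arguments. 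If you want to salvage your induction-on-$n$ scheme, the cleanest fix is to replace your Case 4 analysis by this ``one per pair'' observation, which handles $v_{2n-1},v_{2n}\in T$ immediately ($w(T)\ge (2^n-2^{n-1})a+3\cdot 2^{n-2}a$ is not the issue --- rather, two consecutive pair-mates in one component already force weight $\ge 2^na$ one level down).
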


\begin{proof}[Proof of Proposition~\ref{path}]
Take  a  minimum safe set $X$ of  $(P,w )$.
Since $S$ is a  safe set and $X$ is a minimum  safe set of $(P,w)$,
together with \eqref{condition:ab},
\begin{eqnarray}
 \s(P,w)= w(X) \le w(S)=\sum_{i=1}^{n}w(v_{2i}) =
b+ \sum_{i=2}^{n} 2^{i-1}a = 2^na-2a+b< 2^{n}a.  \label{last_two_2-0}
\end{eqnarray}

\begin{clm}\label{p2} $|X\cap\{ v_{2i}, v_{2i+1} \}|=1$ for all
$i\in \{2, \ldots,n\}$.
\end{clm}
\begin{proof}[Proof of Claim~\ref{p2}]
We apply induction on $n-i \geq 0,$ where $n$ is fixed.
Since  $w(\{v_{2n}, v_{2n+1}\})=2^na$,
if $v_{2n}$ and $v_{2n+1}$  are in a same component of either $P[X]$ or $P-X$,
then this component has weight at least $2^na$, contradicting \eqref{last_two_2-0}.
Hence, $|X\cap\{ v_{2n}, v_{2n+1} \}|=1$.
Assume $|X\cap\{ v_{2i'}, v_{2i'+1} \}|=1$ for all $i'>i$ ($2\le  i\le n-1$).
Then
\[w(X\cap\{ v_{2i+2},v_{2i+3}, \ldots,v_{2n+1}\}) = \sum_{i'=i}^{n-1} 2^{i'}a =
a(2^{i} +2^{i+1}+\cdots+ 2^{n-2} +
 2^{n-1}) = a(2^n   -2^i).\]
If $X\cap\{ v_{2i}, v_{2i+1} \}= \{ v_{2i}, v_{2i+1} \}$, then
$w(X)\ge 2^n a -2^ia +w(\{v_{2i}, v_{2i+1}\})=2^na$, contradicting \eqref{last_two_2-0}.
Suppose $X\cap\{ v_{2i}, v_{2i+1} \}= \emptyset,$ and let
$D$ be the component of $P-X$ that contains $v_{2i}$ and $v_{2i+1}.$
Then $w(D)\ge  2^i a$.
If  there is a component $C'$ of $P[X]$ adjacent to $D$ such that
$C'\subset \{v_1,v_2,\ldots,v_{2i-1}\},$
then
\[w(X)\ge 2^n a -2^ia +w(C') \ge  2^n a -2^ia +w(D)\ge  2^n a,\]
a contradiction to \eqref{last_two_2-0}.
Suppose that there is no component of $P[X]$ contained in $\{v_1,v_2,\ldots,v_{2i-1}\}$.
Then $D \supset \{v_1,v_2,\ldots,v_{2i+1}\},$
and there is a unique component $C$ of $P[X]$ which is adjacent to $D$.
By the induction hypothesis,  $C=\{v_{2i+2}\}$ or $C=\{v_{2i+3}\}$ or $C=\{v_{2i+3},v_{2i+4}\}$.
If $C=\{v_{2i+2}\}$ or $C=\{v_{2i+3}\}$, then, together with~\eqref{condition:ab},
\[  w(D) \ge \sum_{i'=1}^{2i+1} w(v_{i'}) =  2^{i+1}a-3a+2b > 2^{i}a=w(C),\]
a contradiction to the definition of a  safe set.
Similarly, if  $C=\{v_{2i+3},v_{2i+4}\}$, then  $D= \{v_1,v_2,\ldots,v_{2i+2}\},$ and so
\[  w(D) = \sum_{i'=1}^{2i+2} w(v_{i'}) = 2^{i+1}a-3a+2b +2^ia > 2^{i}a+2^{i+1}a\ge w(C),\]
a contradiction to  the definition of a  safe set.
Hence $|X\cap\{ v_{2i}, v_{2i+1} \}|=1.$
\end{proof}

By Claim~\ref{p2}, $w(X\cap\{v_4,v_5,\ldots,v_{2n+1}\}) =2^na-2a$.
Together with \eqref{last_two_2-0}, it follows that $|X\cap\{v_1,v_2,v_3\}|\le 1$.
Furthermore, the following holds.

\begin{clm}\label{p3} $X\cap\{v_1,v_2,v_3,v_4,v_5\}=\{v_2,v_4\}$.
\end{clm}
\begin{proof}[Proof of Claim~\ref{p3}]
Suppose $v_4\not\in X,$ and
let $D$ be the component of $P-X$ containing $v_4$.
If $X\cap\{v_1,v_2,v_3\}\neq \emptyset$, then $X\cap\{v_1,v_2,v_3\}=\{v_i\}$
is a component of $P[X]$, which is a contradiction since $w(D)\ge w(v_4) > w(v_i)$.
Thus  $X\cap\{v_1,v_2,v_3\}= \emptyset$ and so $D=\{v_1,v_2,v_3,v_4\}$.
Note that by Claim~\ref{p2}, $v_5\in X$, and
the component of $P[X]$ containing $v_5$ is  either $ \{v_5\}$ or $ \{v_5,v_6\}$.
However, by \eqref{condition:ab}, $w(\{v_5,v_6\}) \le 2a+4a < 3a+2b =w(D)$.
Hence, $v_4\in X$  and by Claim~\ref{p2}, $v_5\not\in X$.

By the fact that $|X\cap\{v_1,v_2,v_3\}|\le 1$, it remains to show $v_2\in X$.
Suppose not, and let $C$ be the component of $P[X]$ containing $v_4$.
Then $C$ is either $\{v_3,v_4\}$ or $\{v_4\}$.
If $C= \{v_3,v_4\}$, then  $w(v_3)+w(v_4)>w(v_1)+w(v_2)$ by the definition of
a safe set, equivalently, $a+2a\ge b+b$, a contradiction to \eqref{condition:ab}.
If $C=\{v_4\}$, then
$w(v_4)\ge w(v_2)+w(v_3)$ by the definition of a safe set, equivalently,
$2a\ge b+a$, a contradiction to \eqref{condition:ab}.
Hence, $v_2\in X$. Consequently, the claim holds.
\end{proof}

Using induction on $i$ we will show for each $i\in \{1,2, \ldots,n\}$
that $\{v_{2i}\}$ is a component of $P[X].$
By Claim~\ref{p3}, each of $\{v_2\}$ and $\{v_4\}$ is a component of $P[X],$
so assume $i \ge 3.$
By the induction hypothesis, $\{v_{2i-2}\}$ is a component of $P[X]$.
If $v_{2i}\not\in X$ and $v_{2i+1}\in X,$
then $\{v_{2i-1},v_{2i}\}$ is a component of $P-X$ the weight of which
is greater than $w(v_{2i-2}).$ But this is a contradiction to the definition
of a  safe set.
Hence $v_{2i}\in X$ and $v_{2i+1}\not\in X$ follow by Claim~\ref{p2},
and the proposition holds.
 \end{proof}

{At the end of this subsection, we will show that, if a weight function $w$
on a path $P$ with $n$ vertices is bounded, then both the
safe number and the  connected safe number tend to
$\frac{w(P)}{3}$ as $n$ goes to $\infty$.
\begin{lem}\label{path-k-2k+1}
Let $(P,w)$ be a weighted path, and $S$ be a safe set of $(P,w)$. Then
$\frac{k}{2k+1}\leq\frac{w(S)}{w(P)}$, where $k$ is the number of  components of $P[S]$.
\end{lem}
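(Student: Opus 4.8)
The plan is to first recast the inequality in a cleaner additive form. Since $w(P)=w(S)+w(P-S)$ and $w(P)>0$, the claim $\frac{k}{2k+1}\le\frac{w(S)}{w(P)}$ is equivalent, after clearing the positive denominator and cancelling, to
\[
(k+1)\,w(S)\ \ge\ k\,w(P-S).
\]
So it suffices to bound the total weight of the ``gaps'' $P-S$ against the total weight of $S$ in the ratio $k:(k+1)$.

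Next I would describe the structure of the components along the path. Write the $k$ components of $P[S]$ as $C_1,\dots,C_k$ in the order in which they occur along $P$. Between two consecutive components $C_i$ and $C_{i+1}$ there is exactly one component of $P-S$, which I call $D_i$ (for $1\le i\le k-1$); in addition there is at most one component $D_0$ lying before $C_1$ and at most one component $D_k$ lying after $C_k$, either of which may be absent (in which case I set its weight to $0$, making the corresponding constraint below vacuous). Thus $w(P-S)=\sum_{j=0}^{k}w(D_j)$ has at most $k+1$ nonzero terms. Because $P$ is a path, each internal $D_i$ is adjacent only to $C_i$ and $C_{i+1}$, while $D_0$ is adjacent only to $C_1$ and $D_k$ only to $C_k$; since $S$ is a safe set, the definition then yields $w(D_i)\le w(C_i)$ and $w(D_i)\le w(C_{i+1})$ for $1\le i\le k-1$, together with $w(D_0)\le w(C_1)$ and $w(D_k)\le w(C_k)$.

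Finally I would take a nonnegative linear combination of these inequalities, chosen so that every $w(D_j)$ accumulates total coefficient $k$ on the left while every $w(C_i)$ accumulates total coefficient exactly $k+1$ on the right. Concretely, multiply $w(D_i)\le w(C_i)$ by $i$ and $w(D_i)\le w(C_{i+1})$ by $k-i$ for $1\le i\le k-1$, and multiply each of the two boundary inequalities by $k$; summing gives $k\sum_{j}w(D_j)\le(k+1)\sum_{i}w(C_i)$, which is exactly the reformulated target. The main bit of care — and the only real obstacle — is the bookkeeping that makes the coefficient of each $w(C_i)$ telescope to $k+1$, namely the contribution $i$ coming from $D_i$ together with the contribution $k-(i-1)$ coming from $D_{i-1}$, and treating the boundary components $D_0,D_k$ uniformly (including the degenerate cases where one or both are absent, or where $P-S=\emptyset$ and $k=1$). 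I would also remark that the constant $\frac{k}{2k+1}$ is best possible: taking both endpoints of $P$ outside $S$, all $C_i$ of a common weight $c$, and every $D_j$ of weight $c$ realizes $w(S)/w(P)=k/(2k+1)$, which both explains the appearance of the constant and confirms that no larger ratio can be guaranteed.
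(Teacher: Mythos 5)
Your proof is correct. The reformulation $k\,w(P-S)\le (k+1)\,w(S)$ is the right target, the structural description of the gaps $D_0,\dots,D_k$ along the path is accurate, and the coefficients do telescope as claimed: each internal $D_i$ receives $i+(k-i)=k$ on the left, while $C_j$ receives $j$ from $D_j$ and $k-(j-1)$ from $D_{j-1}$ (with the boundary terms contributing $k$ to $C_1$ and $C_k$), giving every component total coefficient $k+1$. The degenerate cases (absent $D_0$ or $D_k$, or $P-S=\emptyset$) are handled by setting weights to zero, as you note.

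The paper reaches the same conclusion by a slightly different combination of the same basic inequalities. It picks the component $S_r$ of minimum weight, charges each gap to the left of $S_r$ against its left neighbour and each gap to the right against its right neighbour, obtaining $w(S)+w(S_r)\ge w(P-S)$, and then finishes with the averaging bound $w(S_r)\le w(S)/k$. Your argument is the ``smoothed'' version of this: instead of concentrating the one unavoidable double-count on an extremal component, you distribute it fractionally over all $k$ components via the weights $i$ and $k-i$. After dividing your inequality by $k$ it reads $w(S)+\tfrac{1}{k}w(S)\ge w(P-S)$, which is exactly what the paper's two steps combine to give. Neither route is stronger than the other; the paper's is marginally shorter because the extremal choice replaces your coefficient bookkeeping, while yours has the advantage of being a single explicit nonnegative linear combination (an LP-duality certificate) and comes with a tightness example showing the constant $\tfrac{k}{2k+1}$ cannot be improved, which the paper does not record at this point.
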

\begin{proof}Let $S_1,\ldots,S_k$ be the components
of $P[S]$, and $P-S=D_1\cup \cdots \cup D_{k+1}$ where
the left (resp. right) neighbor of
$S_i$ is $D_i$ (resp. $D_{i+1}$) for all $i \in \{1,\ldots,k\}$. Note that
for every $i\not\in\{1,k+1\}$, $D_i$ is a component of $G-S$ and each of $D_1$ and $D_{k+1}$ is either a component of $P-S$ or empty.
Take an integer $r\in \{1,\ldots,k\}$ such that
$w(S_r)=\min\{w(S_i) : i=1,\ldots,k\}$.
By the definition of a safe set,
for each $i\in\{1,\ldots,r\}$, $w(S_i)\geq w(D_i)$
and for each $i\in\{r,\ldots,k\}$, $w(S_{i})\ge w(D_{i+1})$.
Then \[w(S)+w(S_r)=\sum_{i=1}^{r}w(S_i) +\sum_{i=r}^{k}w(S_i) \ge
\sum_{i=1}^{r} w(D_i) +\sum_{i=r}^{k}w(D_{i+1}) = w(P)-w(S),\]
and hence $2w(S)+w(S_r)\ge w(P)$.
Since $w(S_r)\le \frac{w(S)}{k}$, we have $\frac{k}{2k+1} \leq \frac{w(S)}{w(P)}$.
\end{proof}
\begin{prop}
Let $a$ and $b$ be real
numbers with $0<a < b$.
Let $\{ (P_n,w_n) \}_{n=1}^{\infty}$ be a sequence of weighted paths $(P_n,w_n)$ where $P_n$ is a path with $n$ vertices and $a \le w_n(v)\le b$ for every vertex $v\in V(P_n)$.
Then $\lim_{n \to \infty}\frac{\s(P_n,w_n)}{w_n(P_n)}=\lim_{n \to \infty}\frac{\cs(P_n,w_n)}{w_n(P_n)}=\frac{1}{3}.$
\end{prop}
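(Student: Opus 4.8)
The plan is to establish matching bounds from above and below that both converge to $\tfrac{1}{3}$, using Lemma~\ref{path-k-2k+1} as the essential tool and the fact that the weights are uniformly bounded. First I would observe that since $\s(P_n,w_n)\le \cs(P_n,w_n)$ always holds, it suffices to prove that $\liminf \frac{\s(P_n,w_n)}{w_n(P_n)}\ge \frac{1}{3}$ on one hand, and $\limsup \frac{\cs(P_n,w_n)}{w_n(P_n)}\le \frac{1}{3}$ on the other; the two chains of inequalities then squeeze both ratios to the common limit $\tfrac{1}{3}$.

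For the lower bound on the safe number, I would take a minimum safe set $S$ of $(P_n,w_n)$ and let $k$ be the number of components of $P_n[S]$. By Lemma~\ref{path-k-2k+1}, $\frac{w_n(S)}{w_n(P_n)}\ge \frac{k}{2k+1}$. The quantity $\frac{k}{2k+1}$ is increasing in $k$ and exceeds $\frac13$ as soon as $k\ge 1$, so the real content is controlling $k$ from below as $n\to\infty$. Here the boundedness enters: each component $S_i$ of $P_n[S]$ together with its adjacent gap components $D_i$ has total weight at most (roughly) $3\,w_n(S_i)$ by the safe-set inequalities $w_n(S_i)\ge w_n(D_i)$ and $w_n(S_i)\ge w_n(D_{i+1})$, while each vertex contributes at least $a$; counting vertices one sees that $k$ must grow at least linearly in $n$. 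Concretely, I would argue $w_n(P_n)\le (2k+1)\max_i w_n(S_i)$ is too weak, and instead bound $w_n(S)\le (\text{something})$ forcing $k\to\infty$; once $k\to\infty$, $\frac{k}{2k+1}\to\frac13$, giving $\liminf \frac{\s(P_n,w_n)}{w_n(P_n)}\ge\frac13$.

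For the upper bound on the connected safe number, I would exhibit an explicit connected safe set achieving a ratio approaching $\tfrac13$. Since $a\le w_n(v)\le b$, a contiguous block of consecutive vertices of an appropriately chosen length $\ell\approx n/3$ (positioned near the middle of the path) will have weight comparable to the two flanking segments; choosing the block so that its weight dominates each of the at most two neighbouring components of $P_n-S$ makes it a connected safe set, and its weight-fraction is at most about $\frac{b\ell}{a(n-\ell)+b\ell}$, which can be driven below $\tfrac13+\varepsilon$ for large $n$ by fine-tuning $\ell$. This yields $\limsup \frac{\cs(P_n,w_n)}{w_n(P_n)}\le\frac13$.

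The main obstacle I anticipate is the lower-bound argument, specifically converting the uniform weight bounds into a guarantee that the component count $k$ of a \emph{minimum} safe set diverges. A minimum safe set could, a priori, have few components if the path admitted a cheap connected safe set, so one must rule this out: I would need to show that any safe set with boundedly many components has weight fraction bounded strictly above $\tfrac13$ would contradict minimality against the explicit $\tfrac13$-type construction, or directly show that minimizing $w_n(S)$ forces $w_n(S_r)=\min_i w_n(S_i)$ to be small, which via $w_n(S_r)\le \frac{w_n(S)}{k}$ and $2w_n(S)+w_n(S_r)\ge w_n(P_n)$ pins the ratio near $\tfrac13$ only when $k$ is large. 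Making the interplay between minimality and the divergence of $k$ fully rigorous — rather than merely plausible — is the delicate step; the upper-bound construction and the monotonicity of $\frac{k}{2k+1}$ are routine by comparison.
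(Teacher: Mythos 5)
Your upper-bound argument is in essence the paper's: one cuts off two end segments $L_n$ and $R_n$ whose weights are within $b$ of $\frac{1}{3}w_n(P_n)$, so that the middle block is a connected safe set of weight at most $\frac{1}{3}w_n(P_n)+2b$, giving $\cs(P_n,w_n)/w_n(P_n)\le \frac{1}{3}+\frac{2b}{an}$. One caveat: the segments must be chosen by cumulative \emph{weight}, not by vertex count; a block of $\ell\approx n/3$ consecutive vertices need not dominate its flanks when the weights vary between $a$ and $b$, and your expression $\frac{b\ell}{a(n-\ell)+b\ell}$ does not tend to $\frac13$ in the regime you would actually need. This is a repairable imprecision, not a structural flaw.

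The lower bound, however, contains a genuine error. Lemma~\ref{path-k-2k+1} gives $\frac{w_n(S)}{w_n(P_n)}\ge\frac{k}{2k+1}$, and since $\frac{k}{2k+1}$ is increasing in $k$ with value $\frac13$ already at $k=1$, every safe set satisfies $w_n(S)\ge\frac13 w_n(P_n)$ with no control on $k$ whatsoever; the lower bound is immediate and the proof is finished at that point. Your plan instead declares that ``the real content is controlling $k$ from below'' and invokes ``once $k\to\infty$, $\frac{k}{2k+1}\to\frac13$'' --- but $\frac{k}{2k+1}\to\frac12$ as $k\to\infty$, not $\frac13$; the infimum over $k\ge1$ is attained at $k=1$. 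Worse, the divergence of $k$ that you flag as the delicate step is false in general: for a constant weight function one has $\s(P_n,w)=\cs(P_n,w)$ (constant functions lie in $\W(P_n)$), so a minimum safe set can be chosen connected, i.e.\ with $k=1$ for every $n$, and no argument can force $k$ to grow. The proposed route for the lower bound therefore both misstates the limit of $\frac{k}{2k+1}$ and aims at a claim that is unprovable because untrue; deleting that entire discussion and simply citing $\frac{k}{2k+1}\ge\frac13$ for $k\ge1$ yields the correct (and much shorter) argument.
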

\begin{proof}
Let $n$ be any positive integer.
We can find a subpath $L_n$ of $P_n$
starting from one pendent vertex of $P_n$ such that
$\frac{1}{3}w(P_n) - b \leq w(L_n) \leq \frac{1}{3}  w(P_n)$ holds.
By symmetry, we can also find a subpath $R_n$ of $P_n$
starting from the other pendent vertex such that
$\frac{1}{3}w(P_n) - b \leq w(R_n) \leq \frac{1}{3}  w(P_n)$ holds.
Then $P_n-(L_n\cup R_n)$ is a connected safe set of $(P_n,w_n)$ and so
$\cs(P_n,w_n) \leq  \frac{1}{3} w(P_n) +2b$.
Hence
$\frac{\cs(P_n,w_n)}{w_n(P_n)} \leq \frac{1}{3} + \frac{2b}{w(P_n)}
\leq \frac{1}{3} + \frac{2b}{an}$.
Together with Lemma~\ref{path-k-2k+1}, we have
\[ \frac{1}{3} \leq \frac{\s(P_n,w_n)}{w_n(P_n)} \leq \frac{\cs(P_n,w_n)}{w_n(P_n)}\leq \frac{1}{3} + \frac{2b}{an}.\]
Since $\frac{1}{3} + \frac{2b}{an}\to \frac{1}{3}$  as $n\to \infty$, this completes the proof.
\end{proof}}

\subsection{Graph suppression and $\G$}\label{subsec:subdivision}

\begin{proof}[Proof of Theorem~\ref{prop:subdivision}]
Let $G$ be a graph obtained by suppression of a vertex $v^*$ from a
graph $G^*$. Let $x$ and $y$ be the neighbors of $v^*$ in $G^*$.
It is sufficient to show that if $G^*\in \G,$ then $G\in \G$.
Assume $G^*\in \G$  and suppose $G\not\in\G$. Then there is a weight
function $w$ on $V(G)$ such that  $\s(G,w)<\cs(G,w)$.
Let $\epsilon$ be a {real number} such that
$0< \epsilon<\frac{\min\{\alpha,\beta\}}{2}$, where
\begin{eqnarray*}
&&\alpha= \min\{ \cs(G,w)-\s(G,w), w(x), w(y)\}, \\
&&\beta= \min_{T:\text{ non-safe set of }(G,w)}\{  w(D)-w(C) >0
\mid C\text{ is a component of }G[T], D\text{ is a component of }G-T\}.
\end{eqnarray*}
We define a weight function $w^*$ on $V(G^*)$ by
$w^*(v^*)=  \epsilon$, $w^*(x)=w(x)-\epsilon$
and $w^*({u})=w({u})$ for every ${u}\in V(G^*)\setminus \{x,v^*\}$
(see Figure~\ref{fig:proof:subdivision}).
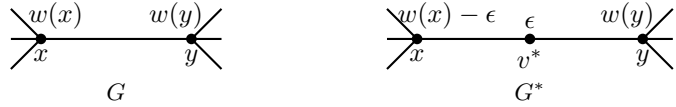
\begin{figure}[h!]\centering
    \begin{tikzpicture}[thick]
    \path
        (0,0) coordinate (x)
        (2,0) coordinate (y)
        (6.5,0) coordinate (v^*)
        (5,0) coordinate (x')
        (8,0) coordinate (y');
\fill  (1,-0.7) node  {\footnotesize$G$};
\fill  (6.5,-0.7) node  {\footnotesize$G^*$};
\fill
(x) node[below]{\small$x$}
(y) node[below]{\small$y$}
(x') node[below]{\small$x$}
(y') node[below]{\small$y$}
 (v^*) node[below]{\small$v^*$};
\fill   (x)+(0.2,0) node[above]{\small$w(x)$}
(y)+(-0.2,0) node[above]{\small$w(y)$}
(x')+(0.4,0) node[above]{\small$w(x)-\epsilon$}
(y')+(-0.2,0) node[above]{\small$w(y)$}
 (v^*) node[above]{\small$\epsilon$};
        \path (x) edge (y);
        \path (v^*) edge (x') edge (y');
        \path (y) edge (y)+(0.4,-0.4) edge (y)+(0.4,0) edge (y)+(0.4,0.4) edge (y);
        \path (y') edge (y')+(0.4,-0.4) edge (y')+(0.4,0) edge (y')+(0.4,0.4) edge (y');
        \path (x) edge (x)+(-0.4,-0.4) edge (x)+(-0.4,0) edge (x)+(-0.4,0.4) edge (x);
        \path (x') edge (x')+(-0.4,-0.4) edge (x')+(-0.4,0) edge (x')+(-0.4,0.4) edge (x');
    \fill
        (x) circle (2pt)
        (y) circle (2pt)
        (v^*) circle (2pt)
        (x') circle (2pt)
        (y') circle (2pt);
    \end{tikzpicture}
\caption{An illustration for the proof of Theorem~\ref{prop:subdivision}}\label{fig:proof:subdivision}
    \end{figure}
Since $G^*\in \G,$ there is a connected safe set $S^*$ of $G^*$
such that $w^*(S^*)=\s(G^*,w^*)$.
For any subset $X$ of $V(G)$, we define   a subset $\tilde{X}$ of $V(G^*)$ by
$$\tilde{X}=\left\{\begin{array}{ll}X\cup\{v^*\}&\text{if }x\in X,\\
X&\text{otherwise.}\end{array}\right.$$
Then $w^*(\tilde{X})=w(X)$ by  definition.

\begin{clm} $\s(G^*,w^*)\le  \s(G,w)$.
  \label{claim1}
\end{clm}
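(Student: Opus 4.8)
The plan is to exhibit a single safe set of $(G^*,w^*)$ whose weight equals $\s(G,w)$, namely the image $\tilde X$ of a minimum safe set of $(G,w)$. First I would fix a minimum safe set $X$ of $(G,w)$, so that $w(X)=\s(G,w)$, and pass to $\tilde X$. Since $w^*(\tilde X)=w(X)$ by the construction of $\tilde X$, it suffices to prove that $\tilde X$ is a safe set of $(G^*,w^*)$; this at once yields $\s(G^*,w^*)\le w^*(\tilde X)=w(X)=\s(G,w)$.

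The key structural observation is that $v^*$ always lands on the same side of the partition as $x$, and in fact inside the component of $x$: indeed $v^*\in\tilde X$ holds exactly when $x\in X$, and the only neighbours of $v^*$ in $G^*$ are $x$ and $y$. I would therefore set up a correspondence between the components of $G^*[\tilde X]$ and those of $G[X]$, and between the components of $G^*-\tilde X$ and those of $G-X$, obtained by replacing the edge $xy$ by the path $x\,v^*\,y$ and attaching $v^*$ to the component of $x$. Because the path $x\,v^*\,y$ joins $x$ and $y$ exactly when the edge $xy$ does, this correspondence preserves the partition into components on both sides. Moreover, since the weight subtracted at $x$ (namely $\epsilon$) equals the weight placed at $v^*$, and $v^*$ always lies in the component of $x$, each pair of corresponding components carries equal weight: writing $\hat C,\hat D$ for the components of $G[X]$ and $G-X$ matched to components $C$ of $G^*[\tilde X]$ and $D$ of $G^*-\tilde X$, we get $w^*(C)=w(\hat C)$ and $w^*(D)=w(\hat D)$.

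Next I would verify that the adjacency relation between the two sides is preserved. Attaching $v^*$ creates no new cross adjacency except along the subdivided edge: a component of $G^*[\tilde X]$ is joined through $v^*$ to a component of $G^*-\tilde X$ precisely when $x$ and $y$ lay on opposite sides in $(G,w)$, which is exactly the adjacency the edge $xy$ produced in $G$. Running through the four cases according to whether $x\in X$ and whether $y\in X$ confirms that $C$ is adjacent to $D$ in $G^*$ if and only if $\hat C$ is adjacent to $\hat D$ in $G$. Hence for every adjacent pair the safe-set inequality $w^*(C)\ge w^*(D)$ follows immediately from $w(\hat C)\ge w(\hat D)$, which holds because $X$ is safe; therefore $\tilde X$ is a safe set of $(G^*,w^*)$, and the claim follows.

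The four-case bookkeeping is routine; the one point demanding genuine care — the main obstacle — is confirming that replacing the single edge $xy$ by the two edges $xv^*,v^*y$ neither merges two previously separate components nor severs an existing connection, so that the component partitions really do correspond bijectively on each side. This is exactly where the suppression hypotheses are used: $v^*$ has degree two and lies on no triangle, so $x$ and $y$ are non-adjacent in $G^*$ and $v^*$ cannot bridge distinct components. Note finally that, beyond the positivity requirement $\epsilon<w(x)$ (which guarantees $w^*$ is a genuine weight function and is ensured by $\epsilon<\tfrac{\alpha}{2}\le\tfrac{w(x)}{2}$), the magnitude of $\epsilon$ plays no role in this claim, since all corresponding weights are exactly equal; the sharper bounds on $\epsilon$ in terms of $\alpha$ and $\beta$ will only be needed for the complementary estimate.
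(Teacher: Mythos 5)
Your proof is correct and takes essentially the same route as the paper: fix a minimum safe set $U$ of $(G,w)$, pass to $\tilde{U}$, and observe that the components, their weights, and the adjacencies between the two sides are preserved, so $\tilde{U}$ is a safe set of $(G^*,w^*)$ with $w^*(\tilde{U})=\s(G,w)$. The paper asserts the component/adjacency correspondence in a single sentence, whereas you verify it case by case; the underlying argument is identical.
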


\begin{proof}[Proof of Claim~\ref{claim1}]
Let $U$ be a minimum safe set of $(G,w)$, that is, $w(U)=\s(G,w)$.
Then $w^*(\tilde{U})=w(U)$.
Note that the adjacency between the components of $G-U$ and $G[U]$ is
the same as the adjacency between the components of
$G^*-\tilde{U}$ and $\tilde{U}$. Thus, $\tilde{U}$ is a safe set for $(G^*,w^*),$
and $\s(G^*,w^*)   \le w^*(\tilde{U})=w(U) =\s(G,w)$.
\end{proof}

Let $T=S^*\setminus\{v^*\}$.
If $T$ is a connected safe set of $(G,w)$, then $\cs(G,w) \le w(T)$, and so by Claim~\ref{claim1},
\[ \s(G^*,w^*)\le  \s(G,w) < \cs(G,w) \le w(T)\le w^*(S^*)+\epsilon =\s(G^*,w^*)+\epsilon,\]
and so $\cs(G,w)-\s(G,w) {<} \epsilon<\alpha$, a contradiction to the definition of $\epsilon$.
Thus $T$ is not a connected safe set of $(G,w)$.
Since $G[T]$ is connected, $T$ is not a safe set of $(G,w)$.
Then there is a component $D$ of $G-T$ such that $E_G(D,T)\neq\emptyset$ and $w(D)>w(T)$.
We have  \begin{eqnarray}\label{eq:sub}
&&w^*(\tilde{D}) >  w^*(S^*)+\epsilon,
\end{eqnarray}
since
\[ w^*(\tilde{D}) = w(D) \ge w(T) +\beta >  w(T) +2\epsilon = w^*(\tilde{T})+2\epsilon
\ge w^*(S^*)+\epsilon,\]
where the first inequality is from the definition of $\epsilon$ and the last inequality is from
\begin{eqnarray}\label{claim4}
&&w^*(\tilde{T})+\epsilon \ge w^*(S^*).
\end{eqnarray}
We note that if $\tilde{T}=S^*$  or $T=S^*$, then \eqref{claim4} holds trivially.
If $\tilde{T}\neq S^*$ and $T\neq S^*$, then $v^*\in S^*$ and $x\not\in S^*$, which implies
$w^*(\tilde{T})=w^*(T)=w^*(S^*)-\epsilon$, and again \eqref{claim4} holds.
Also $D\subset G-T\subset G^*-S^*,$ and $D$ is connected,
hence so is $\tilde{D}.$

\begin{clm}\label{claim5}
$S^*\cap \{v^*,x,y\}=\{v^*,y\}$ and $D\cap\{x,y\}=\{x\}$.
\end{clm}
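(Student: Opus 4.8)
The plan is to use the weight bound $w^*(\tilde D)>w^*(S^*)+\epsilon$ from \eqref{eq:sub} together with the fact that $S^*$ is a connected safe set to force $v^*$ into $S^*$, and then to exploit that $v^*$ has only the two neighbours $x$ and $y$ in $G^*$.

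First I would rule out the possibility that $\tilde D$ is disjoint from $S^*$. If $\tilde D\cap S^*=\emptyset$, then $\tilde D$ is a connected subgraph of $G^*-S^*$, and hence lies inside a single component $D^*$ of $G^*-S^*$, so that $w^*(D^*)\ge w^*(\tilde D)>w^*(S^*)$. Since $\tilde D\ne\emptyset$ we then have $\emptyset\ne S^*\subsetneq V(G^*)$, so $D^*$ is adjacent to $S^*$ because $G^*$ is connected; as $S^*$ is a safe set this forces $w^*(S^*)\ge w^*(D^*)$, a contradiction. Therefore $\tilde D\cap S^*\ne\emptyset$.

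Next I would locate this intersection. We already noted $D\subset G^*-S^*$, so $D\cap S^*=\emptyset$, while by definition $\tilde D$ differs from $D$ only by the possible extra vertex $v^*$. Hence $\tilde D\cap S^*=\{v^*\}$, which simultaneously gives $v^*\in S^*$ and $v^*\in\tilde D$; the latter forces $x\in D$ by the definition of $\tilde D$. From $x\in D$ and $D\cap S^*=\emptyset$ I then obtain $x\notin S^*$.

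Finally, to place $y$ I would use the connectivity of $S^*$. Since $v^*\in S^*$ and the only neighbours of $v^*$ in $G^*$ are $x$ and $y$, connectedness of $S^*$ gives either $y\in S^*$ or $S^*=\{v^*\}$. The latter is impossible: if $S^*=\{v^*\}$, then the component of $G^*-v^*$ containing $y$ has weight at least $w^*(y)=w(y)>\epsilon=w^*(S^*)$ and is adjacent to $v^*$, contradicting that $S^*$ is a safe set. Hence $y\in S^*$, and then $y\notin D$ follows from $D\cap S^*=\emptyset$. Collecting these facts yields $S^*\cap\{v^*,x,y\}=\{v^*,y\}$ and $D\cap\{x,y\}=\{x\}$. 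The only delicate point is the first step, where one must be careful that $S^*$ is a proper non-empty subset of $V(G^*)$ so that the trapped component is genuinely adjacent to $S^*$; the remaining steps are bookkeeping with the definition of $\tilde D$ and the smallness of $\epsilon$.
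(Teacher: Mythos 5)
Your proposal is correct and follows essentially the same route as the paper: use \eqref{eq:sub} and the safe-set property to show $\tilde{D}$ cannot sit inside a component of $G^*-S^*$, deduce $v^*\in S^*$ and $x\in D$, and then use connectedness of $G^*[S^*]$ together with $S^*\neq\{v^*\}$ (which the paper attributes tersely to the definition of $\epsilon$ and you justify explicitly via $w(y)>\epsilon$) to place $y$ in $S^*$. The extra care you take in ruling out $S^*=\{v^*\}$ is a welcome expansion of the paper's one-line justification, but it is not a different argument.
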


\begin{proof}[Proof of Claim~\ref{claim5}]
By \eqref{eq:sub} and the fact that $S^*$ is a connected safe set of $(G^*,w^*)$,
$\tilde{D}$ cannot be contained in a component of $G^*-S^*$.
Since $\tilde{D}$ is a connected subgraph of $G^*$, $\tilde{D}$ is not a subgraph
of $G^*-S^*$.
Since $D\subset G-T\subset G^*-S^*$, it follows that $v^*\in \tilde{D}$ and
$v^*\not\in G^*-S^*$. Thus {$v^*\in S^*$, which implies that $x\in D$.}
Then since $x\in D \subset G^*-S^*$, we have $x\not\in S^*$.
Since $G[S^*]$ is connected and $S^*\neq \{v^*\}$ by the definition of $\epsilon$,
{it follows that $y\in S^*$}. Thus $S^*\cap \{v^*,x,y\}=\{v^*,y\}$.
From $ D \subset  G^*-S^* $, we deduce $D\cap\{x,y\}=\{x\}$.
\end{proof}

{Since $D$ is a connected subgraph of $G^*-S^*$, Claim~\ref{claim5} implies that} $w^*(D) = w^*(\tilde{D})-\epsilon$. Together with~\eqref{eq:sub},
$w^*(D) = w^*(\tilde{D})-\epsilon > w^*(S^*)$, which  contradicts the fact that $S^*$ is a connected safe set of $(G^*,w^*)$.
\end{proof}

When $w$ is a weight function of $P_{2n+1}$ for some $2n+1<m$ defined
in Subsection~\ref{subsec:path}, then since $P_m$ is a subdivision of
$P_{2n+1}$, by defining $w^*$ as in the proof of Theorem~\ref{prop:subdivision},
we can obtain infinitely  many weight functions $w$ on $P_m$ satisfying
$w\not\in \W(P_m)$.

\section{Proof of Theorem~\ref{thm:cycle}}\label{sec:cycle}

\begin{proof}[Proof of Theorem~\ref{thm:cycle}] Suppose that there is
a cycle $C$ not in $\G$.
We take such a $C$ with the shortest length and a weight function $w$ on
$V(C)$ such that $\s(C,w)<\cs(C,w)$. {Note that $C$ has at least four vertices.}
Let $S$ be a minimum safe set
of $(C,w)$.
Let $X_1$, $X_3$, \ldots, $X_{2m-1}$ be the $m$ components of $G[S]$ and
$X_0,X_2,\ldots,X_{2m-2}$ be the $m$ components of $G-S,$
where the indices are considered as elements of $\Z_{2m}.$
We assume $E(X_i,X_{i+1})\neq\emptyset$ for each $i\in\Z_{2m}.$

\begin{clm}\label{size:one}
 For each $i\in\Z_{2m}$, $|X_i|=1$.
\end{clm}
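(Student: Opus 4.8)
The plan is to exploit the minimality of $C$ as a shortest cycle not in $\G$ through a weighted arc-collapsing (edge-contraction) operation. Suppose for contradiction that some arc $X_i$ has $|X_i|\ge 2$. First I would record that $m\ge 2$: if $m=1$, then $S=X_1$ is a single arc, so $C[S]$ is connected and $S$ is itself a connected safe set, giving $\cs(C,w)\le w(S)=\s(C,w)$ and contradicting $\s(C,w)<\cs(C,w)$. Hence there are at least two arcs on each side, which will guarantee that the contracted cycle is still large enough to be a legitimate cycle.

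Next I would collapse the entire arc $X_i$ to a single vertex $z$ of weight $w'(z)=w(X_i)$, leaving all other vertices and weights unchanged, and call the result $(C',w')$. Contracting the edges inside an arc of length $\ge 2$ turns $C$ into a strictly shorter cycle, which still has at least $2m\ge 4$ vertices, so by minimality $C'\in\G$, i.e. $\s(C',w')=\cs(C',w')$. The heart of the argument is to transport safe sets across this contraction so as to contradict this equality.

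For the upper bound on $\s(C',w')$, I would observe that the chosen minimum safe set $S$ keeps all of $X_i$ on one side (the arc $X_i$ is a whole component of $C[S]$ or of $C-S$), so $S$ descends to a vertex set $S'$ of $C'$ with $w'(S')=w(S)$; since the collapse changes neither the component weights nor the adjacency pattern between components, $S'$ is again a safe set, so $\s(C',w')\le w(S)=\s(C,w)$. For the lower bound on $\cs(C',w')$, I would lift a minimum connected safe set $S'$ of $(C',w')$ back to $C$ by expanding $z$ to the arc $X_i$ when $z\in S'$ and keeping $S'$ unchanged otherwise; in either case the lifted set induces a connected subgraph (the arc $X_i$ is connected and re-attaches exactly where $z$ did) and stays safe with the same total weight, so $\cs(C,w)\le w'(S')=\cs(C',w')$. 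Chaining these inequalities yields
\[
\s(C',w')\le \s(C,w) < \cs(C,w) \le \cs(C',w'),
\]
so $\s(C',w')<\cs(C',w')$, contradicting $C'\in\G$. Therefore no arc has more than one vertex, i.e. $|X_i|=1$ for every $i\in\Z_{2m}$.

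The routine content is clear, so the only real care lies in the bookkeeping of the two transport directions: in each case (arc on the $S$-side versus on the $C-S$ side, and $z\in S'$ versus $z\notin S'$) one must verify that connectivity of the safe side is preserved and, crucially, that every component-versus-component weight inequality defining safeness survives the collapse. Because the operation alters neither the multiset of component weights nor which components are adjacent, this preservation should be immediate once the correspondence is set up precisely; stating that correspondence cleanly is the main, if modest, obstacle.
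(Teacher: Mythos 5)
Your proposal is correct and follows essentially the same strategy as the paper: a minimal-counterexample argument in which arcs are contracted to single vertices carrying the arc weights, the resulting shorter cycle is in $\G$ by minimality, and (connected) safe sets are transported back and forth to contradict $\s(C,w)<\cs(C,w)$. The only cosmetic difference is that you contract just the one offending arc while the paper contracts all $2m$ arcs simultaneously to form the quotient cycle $C^*$; your explicit remark that $m\ge 2$ (needed so the contracted graph is still a cycle) is a point the paper leaves implicit.
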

\begin{proof}[Proof of Claim~\ref{size:one}]
Let $C^*$ be the graph such that $V(C^*)=\{X_1,\ldots,X_{2m}\}$ and
$E(C^*)= \{X_iX_{i+1}\mid i\in \Z_{2m} \}$.
Then we define a weight function $w^*$ on $V(C^*)$ by $w^*(X_i)= w(X_i)$  for each $i\in \Z_{2m}$.
Suppose that  $|X_i|\ge 2$ for some $i$, then $C^*$ is a cycle shorter than $C.$
So $C^*\in \G$ follows by minimality of $C,$ and hence $\s(C^*,w^*)=\cs(C^*,w^*).$
Since $S^*=\{X_1,X_3,\ldots,X_{2m-1}\}$ is a safe set of $(C^*,w^*)$, there is a connected safe set
$S_0^*$ of $(C^*,w^*)$ whose weight is at most $w^*(S^*)$.
Then $S_0=\cup_{X\in S_0^*} X$ is a connected safe set of $(C,w)$ and $w(S_0)=w^*(S_0^*)\le w^*(S^*)=w(S)$, which is a contradiction.
\end{proof}

By Claim~\ref{size:one}, we can assume $X_i=\{u_i\}$ for each $i\in \Z_{2m},$
so that $S=\{u_1,u_3,\ldots,u_{2m-1}\}$.
For simplicity, we let $V=V(C)$.
Let $\min(w)=\min\{w(u) ~|~ u \in V\}$ and $\max(w)=\max\{w(u) ~|~ u \in V\}$.
Then
\begin{eqnarray*}
&&\max(w) = \max\{w(u) ~|~ u \in S\}\quad \text{ and }\quad \min(w) = \min\{w(u) ~|~ u \in V \setminus S \}.
\end{eqnarray*}
Without loss of generality, we may assume $w(u_0)=\min(w)$.
Let $k$ be a nonnegative integer such that $k<m$ and
$w(u_{2k+1})=\max(w)$.
Note that {$w(u_{2i+1})\ge w(u_{2i})$  and $w(u_{2i-1})\ge w(u_{2i})$ for any $i\in\Z_m$ by the definition of a safe set, and so}
\begin{eqnarray*}
w(S)- w(V \setminus S) =\sum^{m-1}_{i=0} \big( w(u_{2i+1}) - w(u_{2i}) \big)  &\geq& \sum^{k}_{i=0} \big( w(u_{2i+1}) - w(u_{2i}) \big)\\&=& w(u_{2k+1}) + \sum^{k}_{i=1} \big( -w(u_{2i}) + w(u_{2i-1}) \big) - w(u_{0})\\&=&  w(u_{2k+1})- w(u_{0})=\max(w) - \min(w).\end{eqnarray*}
Hence,
\begin{eqnarray}\label{c2}
&&  w(S) - w(V  \setminus S) \ge  \max(w) - \min(w)
\end{eqnarray}
For every $i \in \Z_{2m}$, define $I_i=\{u_i, u_{i+1}, \ldots, u_{i+m-1}\}$.
Note that, for every $i \in \Z_{2m}$, at least one of the two sets $I_i$ and
$I_{i+m}=V \setminus I_i$ is a  safe set of $(C,w)$. Hence
 $w(I_r) \leq \frac{w(V)}{2} \le w(I_{r+1})$ for some $r \in \Z_{2m}$.
Then $w(I_{r+m+1}) \leq \frac{w(V)}{2} \le w(I_{r+m})$.
Thus both $I_{r+1}$ and $I_{r+m}$ are  safe sets of $(C,w)$, and so
$w(I_{r+1})- w(I_{r+m+1})\ge 0$ and $w(I_{r+m})- w(I_{r})\ge 0$.
Without loss of generality, we assume
\[ w(I_{r+1})-w(I_{r+m+1}) \le  w(I_{r+m})-w(I_{r}).\]
Since \begin{eqnarray*}
2 (w(I_{r+1})-w(I_{r+m+1})) &\leq&
(w(I_{r+1}) - w(I_{r+m+1})) + ( w(I_{r+m}) - w(I_{r}) ) \\
& = & (w(I_{r+1})- w(I_{r}) ) + ( w(I_{r+m})- w(I_{r+m+1}))\\
&= &  (w(u_{r+m}) - w(u_{r})) +  (w(u_{r+m}) - w(u_{r}))\\
& \le&2(\max(w) - \min(w)),
\end{eqnarray*}
it follows that
\begin{eqnarray}\label{c3}
&& w(I_{r+1})-w(I_{r+m+1})\le \max(w) - \min(w).
\end{eqnarray}
Then by \eqref{c2} and \eqref{c3},
\[ 2w(I_{r+1})-w(V)
=w(I_{r+1})-w(I_{r+m+1})  \le \max(w) - \min(w) \le w(S) - w(V\setminus S) = 2w(S) - w(V),\]
and hence
$w(I_{r+1}) \le w(S).$
Since the set $I_{r+1}$ is a connected weighted
safe set and $S$ is a  minimum safe set of $(C,w)$, we get a
contradiction to $\s(C,w)<\cs(C,w)$.
\end{proof}

\section{Proof of Theorem~\ref{prop:ratio}}\label{new}

We start with the following lemma:

\begin{lem}\label{lem:P3} Let $p$ and $q$ be positive integers, where $p\ge q.$
For a graph $G$,  if
$\s(G,w)\ge \frac{q}{p+q} w(G)$ for any weight function $w$ on $G$, then
$$\frac{k(G[S])}{k(G-S)} = \frac{q}{p}$$ for any $\emptyset \neq S\subsetneq V(G)$ such that $
\frac{k(G[S])}{k(G-S)} \le \frac{q}{p}.$
\end{lem}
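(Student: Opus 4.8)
The plan is to prove the statement directly: given a set $S$ that satisfies the ratio bound, I would build a weight function that is forced by the hypothesis to make the bound tight. Fix a nonempty proper subset $S\subsetneq V(G)$ with $\frac{k(G[S])}{k(G-S)}\le \frac{q}{p}$, and write $k_1=k(G[S])$ and $k_2=k(G-S)$, so the assumption reads $pk_1\le qk_2$. The goal is to upgrade this to the equality $pk_1=qk_2$.

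The one idea with any content is the choice of a test weight function, namely the one that equalizes the weights of all components on both sides of $S$. I would assign weights so that each of the $k_1+k_2$ components of $G[S]$ and of $G-S$ has total weight $1$: on a component with $j$ vertices, give every vertex weight $\frac{1}{j}$, which keeps $w$ strictly positive and hence a legitimate weight function. Under this $w$, any component $C$ of $G[S]$ adjacent to any component $D$ of $G-S$ satisfies $w(C)=1=w(D)$, so the safe-set inequality $w(C)\ge w(D)$ holds and $S$ is a safe set of $(G,w)$.

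From here the conclusion is pure arithmetic. Since $S$ is a safe set we get $\s(G,w)\le w(S)=k_1$, while $w(G)=k_1+k_2$; feeding these into the hypothesis $\s(G,w)\ge \frac{q}{p+q}w(G)$ gives $\frac{q}{p+q}(k_1+k_2)\le k_1$, which rearranges to $qk_2\le pk_1$, that is $\frac{k_1}{k_2}\ge \frac{q}{p}$. Together with the standing assumption $\frac{k_1}{k_2}\le\frac{q}{p}$ this yields $\frac{k(G[S])}{k(G-S)}=\frac{q}{p}$, as desired. There is no real obstacle in this argument; the only points demanding care are that the constructed weights stay positive, that the safe-set condition is checked using only the equality of adjacent component weights (so no finer structure of $G$ is needed), and that the two ratio inequalities are combined in the correct direction to force equality rather than merely recover the hypothesis.
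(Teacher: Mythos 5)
Your proposal is correct and follows essentially the same route as the paper: both arguments use a test weight function that equalizes the weights of all components of $G[S]$ and $G-S$, observe that $S$ is then a safe set of weight $\frac{k_1}{k_1+k_2}w(G)$, and compare with the hypothesis $\s(G,w)\ge\frac{q}{p+q}w(G)$ to force $\frac{k_1}{k_2}\ge\frac{q}{p}$ and hence equality. The only cosmetic difference is that you normalize every component to weight $1$ and argue directly, while the paper leaves the common component weight unspecified and phrases the final step as a contradiction.
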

\begin{proof}[Proof of Lemma~\ref{lem:P3}]
Let $S$ be a nonempty proper subset  of $V(G)$ such that {$\frac{k(G[S])}{k(G-S)} \le \frac{q}{p}$}.
Let $S_1, \ldots, S_t$ be the components of $G[S],$ where $t=k(G[S]),$ and
$U_1, \ldots, U_r$ be the components of $G-S,$ where $r=k(G-S),$
so that  $\frac{t}{r}\le \frac{q}{p}$.
Let $w$ be a weight function on $V(G)$ such that $w(S_i)=w(U_j)>0$ for
all $i$ and $j.$ Then $S$ is a safe set of $(G,w)$.
Thus $\s(G,w)\le   \frac{t}{t+r} w(G) $.
If $\frac{t}{r} < \frac{q}{p}$, then
\[ \frac{t}{r} < \frac{q}{p} \quad \Leftrightarrow \quad  \frac{r}{t} > \frac{p}{q} \quad \Leftrightarrow \quad \frac{t+r}{t}=1+\frac{r}{t} >  \frac{p}{q}+1= \frac{p+q}{q} \quad \Leftrightarrow \quad  \frac{t}{t+r}< \frac{q}{p+q},\]
which implies $\s(G,w)< \frac{q}{p+q}w(G)$, a contradiction.
Thus $\frac{t}{r}=\frac{q}{p}$.
\end{proof}

Now we prove Theorem~\ref{prop:ratio}.

\begin{proof}[Proof of Theorem~\ref{prop:ratio}] First we will show that (i), (ii), and (iii) are equivalent.
It is trivial that (i) implies (ii) by Theorem~\ref{thm:cycle}. By the case for $p=q=1$ of Lemma~\ref{lem:P3} and $S=\{u,v\}$ where $u$ and $v$ are not adjacent, it follows that (ii) implies (iii).
Suppose that (iii) is true.
Take a spanning tree $T$ of $G$ with a maximum diameter.
For any two pendent vertices $x$ and $y$ of $T$, if $xy\not\in E(G)$,
then $G-\{x,y\}$ is connected, a contradiction to (iii). Thus any two
pendent vertices of $T$ are adjacent in $G$. If there are at least
three pendent vertices, then we can obtain a spanning tree that has
greater diameter than $T$, a contradiction to the choice of $T$.
Thus, $T$ is a path and $G$ has a Hamiltonian cycle $C=v_1\cdots v_n$ {as $n\ge 5$}.
If $C$ has no chord, then $G$ is a cycle.
Suppose that $C$ has a chord. For any chord, say $v_1v_i$, of $C$,
if a vertex $x$ in $ \{v_2,\ldots, v_{i-1} \}$ and
a vertex  $y$ in $ \{v_{i+1},\ldots, v_{n} \}$ are not adjacent in $G$,
then $G-\{x,y\}$ is connected, a contradiction to (iii).
Thus, for any chord, say $v_1v_i$, of $C$,  a vertex $x$ in
$ \{v_2,\ldots, v_{i-1} \}$ and
a vertex $y$  in $ \{v_{i+1},\ldots, v_{n} \}$ are  adjacent in $G$.
Then such $xy$ becomes a chord of $C$.
Applying this argument again to the chords of $C$, we see that $G$ is a complete graph.
Hence, (iii) implies (i). Therefore, (i), (ii),
and (iii) are equivalent.

It is trivial that (i) implies (iv).
It is sufficient to show that (iv) implies (iii).
First, we give some definitions.
We denote a 2-subset $\{u,v\}$ by $uv$ even though it is not an edge, and in this case, we call it a non-edge.
For any graph $H$, let $E_c(H)$ {(resp. $E_d(H)$)} be the set of edges $uv$ such that $H-\{u,v\}$ is connected {(resp. disconnected)},
and let $N_c(H)$ {(resp. $N_d(H)$)} be the set of non-edges $uv$ such that $H-\{u,v\}$ is connected {(resp. disconnected)}.

Assume that $G$ satisfies (iv).
If $G$ has a cut vertex, then $q_{1,1}(G)=n$ and $q_{n-1,1}(G)\le n-1$,
a contradiction to the assumption. Thus $G$ is 2-connected.
Then from the definitions,
\[|E_c(G)|+|E_d(G)|=|E(G)|=q_{2,1}(G),\]
and by taking the complement in $V(G)$ of each  element of
$E_c(G) \cup N_c(G)$, \[|E_c(G)|+|N_c(G)|=q_{n-2,1}(G).\]
Then by (iv),
\begin{eqnarray}\label{eq:NE}
  |N_c(G)|=|E_d(G)|.
\end{eqnarray}
Suppose that we prove that $E_d(G)=\emptyset.$ Then $N_c(G)=\emptyset$ follows from \eqref{eq:NE};
that is, $ \{u,v\}\in N_d(G)$ holds for any two nonadjacent vertices $u$ and $v,$
and so  $G-\{u,v\}$ is disconnected. This implies (iii).
{Thus, in the following, we will finish the proof by showing $E_d(G)=\emptyset.$}

We will use the following basic property of 2-connected graphs.
\begin{itemize}
\item[($\sharp$)] If $H$ is 2-connected, then for any component
$D$ of $H-\{x,y\}$,
each of $D\cup\{x\}$ and $D\cup\{y\}$ is connected.
\end{itemize}
If there is a component $D$ of $H-\{x,y\}$ such that $x$ is not adjacent to any vertex of $D$, then $D$ is separated by the vertex $y$, and so $G-y$ is disconnected, a contradiction.
Similarly, if there is a component  $D$ of $H-\{x,y\}$ which is not adjacent to $y$, then $x$ is a cut vertex, a contradiction. Thus ($\sharp$) holds.

We also add some observations (O1) and (O2) on   $E_d(G)$.
For an edge $uv\in E_d(G)$,
\begin{itemize}
\item[\rm (O1)] every component $D$ of $G-\{u,v\}$ satisfies $1\le |D| \le |V(G)|-3$;
\item[\rm (O2)]  $N_c(G)  \supset \{ S\subset  (V(G)-\{u,v\}) \mid |S|=2, {|S\cap D |\le 1} \text{ for {any component} }D\text{ of }G-\{u,v\} \}$.
\end{itemize}
To see (O1), take any edge $uv\in E_d(G)$.
Note that $G-\{u,v\}$ is disconnected {and so} $1\le |D| \le |V(G)|-3$
follows. Also (O2) holds, {to see why, let $S=\{u',v'\}\subset (V(G)-\{u,v\})$ and
$|S\cap D|\le1$ for any component $D$ of $G-\{u,v\}$.
Then clearly $S$ is a non-edge.
From the fact that $G$ is 2-connected, together with {$(\sharp)$},
we can see that every component of $G-S$ contains one of $u$ and $v$, and so $G-S$ is connected.}

To show that \eqref{eq:NE} implies $E_d(G)=\emptyset,$
we prove its contrapositive, so we assume  $E_d(G)\neq \emptyset.$
{Then, at the end, we will reach a contradiction to \eqref{eq:NE} by
showing that $|N_c(G)|>|E_d(G)|$.
Since $E_d(G)\neq \emptyset,$} we can take an edge $u_1v_1 \in E_d(G)$ so that there is a
component $C_1$ of $G-\{u_1,v_1\}$  such that $u_1v_1$ is the unique edge of
$G[C_1\cup\{u_1,v_1\}]$ which belongs to $E_d(G)$.
We can take such an edge by considering all edges $uv\in E_d(G)$ and
all components $C$ of $G-\{u,v\},$ and choose $uv$ and $C$ so that $C$ is
as small as possible.
Let $G_1=G-C_1$ and let $N_1$ be the set of non-edges defined by
\[N_1=\{ xy \mid x\in C_1, y\in V(G)- (C_1\cup\{u_1,v_1\}) \} .\]

We proceed similarly to construct a maximal sequence of subgraphs
$G_0,G_1,\ldots,G_p$ of $G,$ where $G_0=G$ and $p\geq 1$ as follows.
Assume that we have  $u_{i-1}v_{i-1}$, $C_{i-1}$, $G_{i-1}$,
and $N_{i-1}$ for some $i\ge 2$.
As long as $E_d(G_{i-1})\setminus\{ u_1v_1,\ldots,
u_{i-1}v_{i-1}\}\neq \emptyset$, we continue recursively:
\begin{itemize}
\item[](Step 1) Take an edge $u_iv_i \in E_d(G_{i-1})\setminus
\{ u_1v_1,\ldots, u_{i-1}v_{i-1}\}$  so that there is a component
$C_i$ of $G_{i-1}-\{u_{i},v_{i}\}$ such that $u_{i}v_{i}$ is the
unique edge of $G_{i-1}[C_i\cup\{u_{i},v_{i}\}]$ which belongs to
$E_d(G_{i-1})\setminus\{ u_1v_1,\ldots, u_{i-1}v_{i-1}\}$;
\item[](Step 2) Let $G_i=G_{i-1}-C_i$ and let $N_{i}=\{ xy \mid
x\in C_{i}, y\in V(G_{i-1})- (C_{i}\cup\{u_{i},v_{i}\}) \}$.
\end{itemize}
{We note that (Step 1) is possible by choosing the edge $u_iv_i \in
E_d(G_{i-1})\setminus \{ u_1v_1,\ldots, u_{i-1}v_{i-1}\}$
and the component $C_i$ of $G_{i-1}-\{u_{i},v_{i}\}$
so that $|C_i|$ is as small as possible.
If the subgraph induced by
$C_i\cup\{ u_i,v_i\}$ contains an edge $u'v'$ in the set
$E_d(G_{i-1})\setminus \{ u_1v_1,\ldots, u_{i-1}v_{i-1}\}$,
then $G_{i-1}-\{u',v'\}$ has a component whose order is smaller
than $|C_i|$, a contradiction to the choice of $u_iv_i$. Thus
$u_{i}v_{i}$ is the unique edge of
$G_{i-1}[C_i\cup\{u_{i},v_{i}\}]$ which belongs to
$E_d(G_{i-1})\setminus\{ u_1v_1,\ldots, u_{i-1}v_{i-1}\}$.
}

\begin{clm}\label{claim:2connected}{
$G_i$ is $2$-connected for each $i\le p$.
}
\end{clm}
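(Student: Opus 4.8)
The plan is to argue by induction on $i$. The base case $G_0=G$ holds because we have already shown that $G$ is $2$-connected, so it suffices to prove that if $G_{i-1}$ is $2$-connected then so is $G_i=G_{i-1}-C_i$. Throughout I would keep in mind the concrete description of $G_i$: its vertex set is $\{u_i,v_i\}$ together with all components of $G_{i-1}-\{u_i,v_i\}$ except $C_i$, and since $u_iv_i\in E_d(G_{i-1})$ the graph $G_{i-1}-\{u_i,v_i\}$ has at least two components, so at least one survives and $|V(G_i)|\ge 3$. Connectivity of $G_i$ is then immediate: $u_i$ and $v_i$ are joined by the edge $u_iv_i$, and by $(\sharp)$ applied to $G_{i-1}$ every surviving component is adjacent to both $u_i$ and $v_i$, hence attaches to this edge.

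To finish I would show that $G_i$ has no cut vertex, so let $z\in V(G_i)$ and check that $G_i-z$ is connected. If $z\in\{u_i,v_i\}$, say $z=u_i$, then $G_i-u_i$ consists of $v_i$ together with the surviving components, each of which is adjacent to $v_i$ by $(\sharp)$; hence $G_i-u_i$ is connected. The remaining, and main, case is $z\notin\{u_i,v_i\}$, where $G_i-z=G_{i-1}-(C_i\cup\{z\})$.

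For that case I would first invoke the inductive hypothesis: since $G_{i-1}$ is $2$-connected, $G_{i-1}-z$ is connected. Because $z\notin C_i\cup\{u_i,v_i\}$, the set $C_i$ is still a component of $(G_{i-1}-z)-\{u_i,v_i\}$. Now I would observe that every component $D'$ of $(G_{i-1}-z)-\{u_i,v_i\}$ must be adjacent to $\{u_i,v_i\}$: otherwise $D'$ would have no edge leaving it in $G_{i-1}-z$ and would therefore be a proper connected component of $G_{i-1}-z$, contradicting connectivity, since $C_i$ is another component disjoint from and nonadjacent to $D'$. Combining this with the edge $u_iv_i$ shows that deleting the single component $C_i$ from the connected graph $G_{i-1}-z$ leaves it connected, i.e.\ $G_i-z=(G_{i-1}-z)-C_i$ is connected, as required.

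The one delicate point, and the step I expect to require the most care, is precisely this last case: one must rule out that removing $z$ and the entire branch $C_i$ simultaneously disconnects the graph. The clean way around it is exactly the two-step reduction above, namely to delete $z$ first (safe by $2$-connectivity of $G_{i-1}$), note that $C_i$ then remains an isolated branch hanging off the edge $u_iv_i$ in the connected graph $G_{i-1}-z$, and only afterwards delete $C_i$, which cannot disconnect anything because all the other pieces are already tied to the edge $u_iv_i$.
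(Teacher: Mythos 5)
Your proof is correct and rests on the same key facts as the paper's: by induction $G_{i-1}$ is $2$-connected, so $(\sharp)$ makes $C_i$ a piece attached only to the two endpoints of the edge $u_iv_i$, and hence its removal cannot create a cut vertex. The only difference is one of presentation: you argue the contrapositive directly ($G_{i-1}-z$ connected implies $G_i-z$ connected), whereas the paper assumes a cut vertex $x$ of $G_i$ and shows it would already be a cut vertex of $G_{i-1}$; your write-up is slightly more thorough in also checking $|V(G_i)|\ge 3$ and connectivity of $G_i$ itself.
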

\begin{proof}[Proof of Claim~\ref{claim:2connected}]
For $G_0=G$ this is true by assumption.
Suppose that $G_i$ has a cut vertex $x$ for some $i\ge1$, where $G_j$ is 2-connected for all $j<i$.
Since both $u_i$ and $v_i$ are vertices of $G_i$, we may assume
that $u_i$ is a vertex of $G_i-x$.
Let $C$ be the component of $G_i-x$ which contains $u_i$, and $C'$ be another component of $G_i-x$.
Since $u_iv_i\in E(G_i)$, note that if $v_i\neq x$, then $v_i\in C$.
Recall that $G_i=G_{i-1}-C_{i}$.
By the minimality of $i$, $G_{i-1}$ is 2-connected and so by ($\sharp$), both $C_{i}\cup\{u_i\}$ and $C_{i}\cup\{v_i\}$ are connected.
Since $C_{i}$ is a component of $G_{i-1}-\{u_i,v_i\}$, $C_{i}$ is
adjacent to only $\{u_i,v_i\}$ among all vertices of $G_{i-1}$.
Hence, $C\cup C_{i}$ is connected in $G_{i-1}$ and $C_i$ is adjacent
to only $C$ among the components of $G_i-x$. This implies that $C'$
is still a component of $G_{i-1}-x$, which implies that $G_{i-1}-x$
is disconnected, a contradiction.
\end{proof}

\begin{clm}\label{claim:connected}
For any $\{u,v\}\subset V(G_{i}),$ if $\{u,v\}\neq \{u_i,v_i\},$ then
\begin{itemize}
\item[(a)] if $G_i-\{u,v\}$ is connected then $G_{i-1}-\{u,v\}$ is connected,
\item[(b)] if $uv\in E_d(G_i)$, then $G_{i-1}-\{u,v\}$ is disconnected.
\end{itemize}
\end{clm}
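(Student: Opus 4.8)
The plan is to exploit the very simple way in which $G_{i-1}-\{u,v\}$ is built from $G_i-\{u,v\}$. Recall that $G_i=G_{i-1}-C_i$, where $C_i$ is a component of $G_{i-1}-\{u_i,v_i\}$; in particular $C_i$ is connected and, in $G_{i-1}$, no vertex of $C_i$ has a neighbour outside $\{u_i,v_i\}$. Moreover, since $G_{i-1}$ is $2$-connected by Claim~\ref{claim:2connected}, property $(\sharp)$ guarantees that $C_i$ is adjacent to \emph{both} $u_i$ and $v_i$. Thus, for any $\{u,v\}\subset V(G_i)$, the graph $G_{i-1}-\{u,v\}$ is obtained from $G_i-\{u,v\}$ by re-inserting the connected set $C_i$, attached precisely to those of $u_i,v_i$ that survive the deletion of $\{u,v\}$.

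First I would record the central observation that drives both parts. Because $\{u,v\}\neq\{u_i,v_i\}$ and both are $2$-element sets, at least one of $u_i,v_i$ lies in $G_i-\{u,v\}$. The key point is that $C_i$ can never join two distinct components of $G_i-\{u,v\}$: indeed, if both $u_i$ and $v_i$ survive, then since $u_iv_i\in E(G_{i-1})$ and neither endpoint lies in $C_i$, the edge $u_iv_i$ also lies in $G_i-\{u,v\}$, so $u_i$ and $v_i$ already belong to the \emph{same} component there. Consequently, re-inserting $C_i$ (which touches $G_i-\{u,v\}$ only through $u_i,v_i$) can only enlarge a single component of $G_i-\{u,v\}$, never merge two of them.

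Granting this, both statements follow immediately. For (a), assume $G_i-\{u,v\}$ is connected. Since $C_i$ is connected and, by $(\sharp)$, is adjacent to at least one surviving vertex of $\{u_i,v_i\}$, the set $C_i$ attaches to the single component $G_i-\{u,v\}$, so $G_{i-1}-\{u,v\}$ is connected. For (b), assume $uv\in E_d(G_i)$, so that $G_i-\{u,v\}$ has at least two components; by the central observation, re-inserting $C_i$ cannot decrease the number of components below two, so $G_{i-1}-\{u,v\}$ remains disconnected.

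The only place requiring care is part (b), where a priori $C_i$ might reconnect two components of $G_i-\{u,v\}$; this is exactly the scenario excluded by the central observation, whose whole content is that the edge $u_iv_i$ forces $u_i$ and $v_i$ into a common component whenever both survive. The hypothesis $\{u,v\}\neq\{u_i,v_i\}$ enters only to ensure that at least one of $u_i,v_i$ is not deleted, so that $C_i$ does not become a spurious isolated component in part (a); it plays no essential role in (b). Apart from this observation the argument is routine bookkeeping about components under vertex re-insertion.
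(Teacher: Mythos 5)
Your proposal is correct and follows essentially the same route as the paper's proof: both arguments rest on the facts that $C_i$ meets the rest of $G_{i-1}$ only in $u_i,v_i$, that $(\sharp)$ makes $C_i\cup\{u_i\}$ and $C_i\cup\{v_i\}$ connected, and that the edge $u_iv_i$ forces the surviving vertices of $\{u_i,v_i\}$ into a single component of $G_i-\{u,v\}$, so that re-inserting $C_i$ neither creates nor merges components. The paper phrases this by exhibiting the connected set $C_i\cup C$ (with $C$ the component containing $u_i$), but the content is identical.
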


\begin{proof}[Proof of Claim~\ref{claim:connected}]
Take $\{u,v\}\subset V(G_{i})$ so that $\{u,v\}\neq \{u_i,v_i\}$.
Since $\{u,v\}\neq \{u_i,v_i\}$,
we may assume that $u_i\notin \{u,v\} $.
Let $C$ be the component of $G_i-\{u,v\}$ containing the vertex $u_i$.
Recall that $C_i$ is a component of $G_{i-1}-\{u_i,v_i\}$ taken from
(Step 1), and by {Claim~\ref{claim:2connected} and ($\sharp$)},
$\{u_i\}\cup C_i$ induces a connected graph in $G_{i-1}$.
Each of $\{u_i\}\cup C_i$ and $C$ is a connected graph in
$G_{i-1}$ containing the vertex $u_i$.
Hence, $\{u_i\}\cup C_i\cup C=C_i\cup C$ induces a connected graph in $G_{i-1}$.
Since each of $C_i$ and $C$ is disjoint from $\{u,v\}$, $C_i\cup C$
induces a connected graph $H$ in $G_{i-1}-\{u,v\}$.

To show (a), suppose that $G_i-\{u,v\}$ is connected.
Then $C=G_{i}-\{u,v\}$ and so $H$ is a connected spanning subgraph of $G_{i-1}-\{u,v\}$.
Thus  $G_{i-1}-\{u,v\}$  is a connected graph, and (a) holds.
To show (b), suppose that $uv\in E_d(G_i)$.
Then $G_{i}-\{u,v\}$ is disconnected. From {the fact that $u_i$ and $v_i$ are adjacent}, $v_i \in C$ if $v_i\not\in \{u,v\}$. Note that $C_i$ is only connected to two vertices $u_i$ and $v_i$ among all vertices of $G_{i-1}$. Thus, $C$ is the unique component of $G_i-\{u,v\}$ which is adjacent to $C_i$.
Hence, $G_{i-1}-\{u,v\}$ is not connected, and so $uv\in  E_d(G_{i-1})$.
\end{proof}

\begin{clm}\label{claim:finish}
For every $i=1,\ldots,p,$
\[E_d(G_{i})\setminus\{ u_1v_1,\ldots, u_{i}v_{i} \}
=E_d(G_{i-1})\setminus\{ u_1v_1,\ldots, u_{i}v_{i} \}.\]
\end{clm}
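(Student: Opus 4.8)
The plan is to prove the asserted set equality by establishing the two inclusions separately. Write $F_i=\{u_1v_1,\ldots,u_iv_i\}$ for brevity (with $F_0=\emptyset$), so the target identity reads $E_d(G_i)\setminus F_i = E_d(G_{i-1})\setminus F_i$. Throughout I will use that $G_i=G_{i-1}-C_i$ is an \emph{induced} subgraph of $G_{i-1}$, so a pair $\{u,v\}\subset V(G_i)$ is an edge of $G_i$ exactly when it is an edge of $G_{i-1}$, and note that $u_i,v_i\in V(G_i)$ since $C_i$ is a component of $G_{i-1}-\{u_i,v_i\}$.

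The inclusion $E_d(G_i)\setminus F_i \subseteq E_d(G_{i-1})\setminus F_i$ will be immediate from Claim~\ref{claim:connected}. If $uv\in E_d(G_i)\setminus F_i$, then $u,v\in V(G_i)$ and $\{u,v\}\neq\{u_i,v_i\}$ (because $u_iv_i\in F_i$), so part (b) of that claim gives that $G_{i-1}-\{u,v\}$ is disconnected; hence $uv\in E_d(G_{i-1})$, and since $uv\notin F_i$ this direction is finished.

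For the reverse inclusion I would take $uv\in E_d(G_{i-1})\setminus F_i$ and proceed in two steps. First I must confine both endpoints to $V(G_i)$, i.e.\ show $u,v\notin C_i$. Since $uv\notin F_i$ we have in particular $uv\in E_d(G_{i-1})\setminus F_{i-1}$ with $uv\neq u_iv_i$, so by the uniqueness stipulated in (Step 1) the pair $uv$ is not an edge of $G_{i-1}[C_i\cup\{u_i,v_i\}]$. If, say, $u\in C_i$, then because $C_i$ is a component of $G_{i-1}-\{u_i,v_i\}$ every neighbor of $u$ lies in $C_i\cup\{u_i,v_i\}$; in particular so does $v$, forcing $uv$ to be an edge of $G_{i-1}[C_i\cup\{u_i,v_i\}]$, a contradiction. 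Hence $u,v\in V(G_i)$ and $uv$ is an edge of $G_i$. Second, since $\{u,v\}\neq\{u_i,v_i\}$ and $G_{i-1}-\{u,v\}$ is disconnected, the contrapositive of Claim~\ref{claim:connected}(a) yields that $G_i-\{u,v\}$ is disconnected; therefore $uv\in E_d(G_i)\setminus F_i$.

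I expect the main obstacle to be the localization step in the reverse inclusion, namely ruling out an endpoint of $uv$ lying in the deleted component $C_i$. This is precisely the point at which the minimality/uniqueness choice of $u_iv_i$ and $C_i$ in (Step 1) is essential: without it, a disconnecting edge sitting inside $C_i\cup\{u_i,v_i\}$ could belong to $E_d(G_{i-1})$ yet vanish once $C_i$ is removed, breaking the equality. Once the endpoints are pinned down to $V(G_i)$, parts (a) and (b) of Claim~\ref{claim:connected} supply the connectivity transfer in both directions and the identity follows.
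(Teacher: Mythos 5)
Your proposal is correct and follows essentially the same route as the paper: the forward inclusion via Claim~\ref{claim:connected}(b), and the reverse inclusion by first using the uniqueness/minimality choice in (Step~1) to rule out an endpoint in $C_i$ (so that $uv$ survives as an edge of $G_i$) and then applying the contrapositive of Claim~\ref{claim:connected}(a). You merely spell out the localization step ($u,v\notin C_i$ because any edge meeting $C_i$ lies in $G_{i-1}[C_i\cup\{u_i,v_i\}]$) a little more explicitly than the paper does.
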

\begin{proof}[Proof of Claim~\ref{claim:finish}]
For simplicity, let $E_i=E_d(G_{i})\setminus\{ u_1v_1,\ldots, u_{i}v_{i} \}$
and $E_{i-1}=E_d(G_{i-1})\setminus\{ u_1v_1,\ldots, u_{i}v_{i} \}$.
 Take an edge $uv \in E_i$. By (b) of Claim~\ref{claim:connected}, $G_{i-1}-\{u,v\}$ is disconnected, and so $uv\in  E_d(G_{i-1})$.
Since $uv\not\in \{ u_1v_1,\ldots, u_{i}v_{i} \}$, $uv\in E_{i-1}$.
Thus $E_i \subset E_{i-1}$.
To show that $E_{i-1}  \subset E_i$, take
 an edge $uv \in E_{i-1}$.
By the definition of $C_i$, for any edge $u'v'$ in $G_{i-1}[C_{i}\cup\{u_i,v_i\}]$ except {the edges in $\{u_1v_1,\ldots, u_iv_i\}$},
$G_{i-1}-\{u',v'\}$ is connected. Therefore, from the fact that  $uv \in E_d(G_{i-1})$, $uv$ is an edge of $G_i$.
By (a) of Claim~\ref{claim:connected}, $G_i-\{u,v\}$ is disconnected, and so $uv\in  E_d(G_i)$.
Since $uv\not\in \{ u_1v_1,\ldots, u_{i}v_{i} \}$, $uv\in E_{i}$.
Hence, the claim holds.
\end{proof}

\begin{clm}\label{claim:last}
$N_i\neq \emptyset,$ $N_i \cap N_c(G_i)=\emptyset,$ and
$N_i \cup N_c(G_i) {\subset} N_c(G_{i-1})$ for every
$i=1,\ldots,p.$  Moreover,  $|N_1|\ge 2$.
\end{clm}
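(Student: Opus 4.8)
My plan is to dispatch the three set-theoretic assertions one at a time and then do a short count for $|N_1|\ge 2$, with essentially all of the work concentrated in a single connectivity argument. The first two assertions are immediate. For $N_i\ne\emptyset$, observe that $C_i$ is a component of $G_{i-1}-\{u_i,v_i\}$ and hence nonempty, while $u_iv_i\in E_d(G_{i-1})$ makes $G_{i-1}-\{u_i,v_i\}$ disconnected, so $V(G_{i-1})-(C_i\cup\{u_i,v_i\})\ne\emptyset$; pairing any such vertex with a vertex of $C_i$ produces an element of $N_i$. For $N_i\cap N_c(G_i)=\emptyset$, every pair in $N_i$ has an endpoint in $C_i$, and since $G_i=G_{i-1}-C_i$ that endpoint is not a vertex of $G_i$, so no element of $N_i$ is even a non-edge of $G_i$.

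For the inclusion $N_i\cup N_c(G_i)\subseteq N_c(G_{i-1})$ I would treat the two halves separately, since they have quite different flavours. The half $N_c(G_i)\subseteq N_c(G_{i-1})$ is a direct consequence of part (a) of Claim~\ref{claim:connected}: if $uv\in N_c(G_i)$ then $u,v\in V(G_i)$, and $\{u,v\}\ne\{u_i,v_i\}$ because $uv$ is a non-edge while $u_iv_i$ is an edge, so connectivity of $G_i-\{u,v\}$ gives connectivity of $G_{i-1}-\{u,v\}$; moreover $uv$ remains a non-edge of $G_{i-1}$ since $G_i$ is an induced subgraph, whence $uv\in N_c(G_{i-1})$.

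The main obstacle is the remaining half, $N_i\subseteq N_c(G_{i-1})$, precisely because an element $xy\in N_i$ has $x\in C_i$ with $C_i$ deleted from $G_i$, so Claim~\ref{claim:connected} no longer applies and I must verify connectivity of $G_{i-1}-\{x,y\}$ by hand using the $2$-connectedness of $G_{i-1}$ (Claim~\ref{claim:2connected}) and property $(\sharp)$. Fix $xy\in N_i$ with $x\in C_i$ and $y\notin C_i\cup\{u_i,v_i\}$, and let $D$ be the component of $G_{i-1}-\{u_i,v_i\}$ containing $y$, so $D\ne C_i$. Assuming for contradiction that $G_{i-1}-\{x,y\}$ is disconnected, take a component $B$ avoiding $u_i$; since $u_iv_i$ is a surviving edge, $B$ avoids $v_i$ as well. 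By $(\sharp)$ every component of $G_{i-1}-\{u_i,v_i\}$ other than $C_i$ and $D$ survives intact under deletion of $\{x,y\}$ and attaches to $u_i$, hence cannot meet $B$; this forces $B\subseteq (C_i-x)\cup(D-y)$. As $C_i$ and $D$ are distinct components of $G_{i-1}-\{u_i,v_i\}$ there are no edges between them, so connectedness of $B$ confines it to $C_i-x$ or to $D-y$. In the first case $N_{G_{i-1}}(B)\setminus B\subseteq\{x,y\}\cap(C_i\cup\{u_i,v_i\})=\{x\}$, making $x$ a cut vertex of $G_{i-1}$; in the second case symmetrically $y$ is a cut vertex. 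Either conclusion contradicts $2$-connectedness, so $G_{i-1}-\{x,y\}$ is connected and $xy\in N_c(G_{i-1})$. I expect the delicate point to be exactly the step that confines $B$ to a single original component before invoking the cut-vertex contradiction.

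Finally, for $|N_1|\ge 2$ I would simply count. By definition $|N_1|=|C_1|\cdot|V(G)-(C_1\cup\{u_1,v_1\})|$, and the two factors sum to $n-2$, which is at least $3$ since $n\ge 5$. Each factor is at least $1$: a component is nonempty, and by (O1) it has at most $n-3$ vertices, so its complement within $V(G)-\{u_1,v_1\}$ is also nonempty. As the two factors are positive integers summing to at least $3$, at least one of them is at least $2$, and therefore their product is at least $2$.
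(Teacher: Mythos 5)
Your proof is correct and follows essentially the same route as the paper: the paper disposes of $N_i\neq\emptyset$, $N_i\cap N_c(G_i)=\emptyset$ and $N_c(G_i)\subset N_c(G_{i-1})$ exactly as you do (the last via Claim~\ref{claim:connected}(a)), obtains $N_i\subset N_c(G_{i-1})$ by invoking observation (O2) applied to the $2$-connected graph $G_{i-1}$, and gets $|N_1|\ge |C_1|(n-2-|C_1|)\ge n-3\ge 2$ from (O1). The only difference is that you re-derive the content of (O2) for $G_{i-1}$ by hand (your cut-vertex argument for the component $B$), which is a sound, slightly more explicit version of the paper's one-line appeal to $(\sharp)$ and $2$-connectedness.
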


\begin{proof}[Proof of Claim~\ref{claim:last}]
By the definition of $N_i$ and $G_i$, it is clear that
$N_i\neq \emptyset$ and $N_i \cap N_c(G_i)=\emptyset$ hold.
By (O2), $N_i$ is a subset of $N_c(G_{i-1})$.
For any non-edge $xy\in N_c(G_i)$,  $G_i-\{x,y\}$ is connected
and so $G_{i-1}-\{x,y\}$ is a connected graph  by (a) of
Claim~\ref{claim:connected}, which implies
$xy\in N_c(G_{i-1})$. Thus $N_c(G_i)\subset N_c(G_{i-1})$.

Moreover, from (O1), by the assumption of $n\ge 5,$ we have
$|N_1|\ge |C_1| (n-2-|C_1|)\ge n-3 \ge 2$.
\end{proof}

From Claim~\ref{claim:finish}, for each $i=1,\ldots,p,$
\begin{eqnarray*}
E_d(G_{i})\setminus\{ u_1v_1,\ldots, u_{i}v_{i} \}
=E_d(G_{0})\setminus\{ u_1v_1,\ldots, u_{i}v_{i} \},
\end{eqnarray*}
which implies $p=|E_d(G)|,$ since
$E_d(G_{i})\setminus\{ u_1v_1,\ldots, u_{i}v_{i} \}\neq \emptyset$
for $i<p,$ and
$E_d(G_{p})\setminus\{ u_1v_1,\ldots, u_{p}v_{p} \} =\emptyset.$
Thus by Claim~\ref{claim:last},
\begin{eqnarray*}
&& N_c(G)=N_c(G_0) \ \supset \ N_1 \cup N_c(G_1) \ \supset  \ N_1 \cup N_2 \cup N_c(G_2) \ \supset \ \cdots \ \supset \ N_1 \cup N_2 \cup \cdots \cup N_p,
\end{eqnarray*}
where $N_i$ and $N_j$ are disjoint whenever $1 \leq i < j \leq p.$
Again by Claim~\ref{claim:last},
\[  |N_c(G)| \ge |N_1| +|N_2|+ \cdots + |N_p| \ge 2 + \underbrace{1 + \cdots +1}_{ (p-1)\text{ times}} =p+1 > p =|E_d(G)|, \]
which violates \eqref{eq:NE}. Hence the proof is complete.
\end{proof}

\section{Closing remarks}\label{sec:open}

We finally give two remarks in our main results.

\begin{rmk}
From
the proof of
Theorem~\ref{thm:cycle}, we have the following
strongly linear time
algorithm for calculating the safe number of a cycle with a
weight function.

\noindent
\hrulefill \\
\textsc{{\bf WEIGHTED SAFE NUMBER OF A CYCLE GRAPH}}\\
\textsc{{\bf INPUT}}: A cycle {$C$} such that
$V(C):=\{v_i | i \in \Z_n \}$ and $E(C):=\{v_iv_{i+1}| i \in \Z_n\}$
and a {positive real-valued function $w$ on $V(C)$}. \\
\textsc{{\bf OUTPUT}}: The {(connected)} safe number $\s(C,w)${$(=\cs(C,w))$}.
\begin{quote}
\begin{alg-enumerate}
\item{Calculate the total weight $w(V):=\sum_{i=0}^{n-1} w(v_i)$.}
\item{Set $w_{\min}:=w(V)$}
\item{Set $w:=w(v_0)$}
\item{Set $\ell:=0 \/ (\in \Z_n)$}
\item{Set $k:=0 \/ (\in \Z_n)$}
\item{While $w < \frac{w(V)}{2}$ do:\\
    \hspace{30pt} set $w := w + w(v_{\ell+1});$ \hspace{2pt} set $\ell:=\ell+1;$}\label{6}
\item{If $w < w_{\min}$ then set $w_{\min}:=w$}
\item{Set $w:=w - w(v_k)$}
\item{Set $k:=k+1$}
\item{If $k=0$ then return the number $w_{\min}$}
\item{Goto Step\hspace*{0.2em}\ref{6}}
\end{alg-enumerate}
\end{quote}
\hrulefill

{We remark that for each $k\in\Z_n$, Step 6 determines the `smallest' $\ell\in\Z_n$ such that $\{v_k,v_{k+1},\ldots,v_\ell\}$ has weight at least $\frac{w(V)}{2}$.}

\end{rmk}

Note that, in contrast with the above, it was shown in
\cite{BFLMMST-weighted-sf} that the safe number of
a given weighted path can be calculated in $O(n^3)$-time.

\begin{rmk} We note that (i), (ii), and (iii) in Theorem~\ref{prop:ratio}
are equivalent without the assumption of $n \ge 5$.
In addition, if we replace (iii) and/or (iv) in Theorem~\ref{prop:ratio} by
any of the following stronger statements, then the theorem remains true:
\begin{itemize}
  \item[(iii${}^{\prime}$)] $k(G-S)=k(G[S])$ for any $S\subset V(G)$ with $|S|=2$;
  \item[(iii${}^{\prime\prime}$)] $k(G-S)=k(G[S])$ for any $S\subset V(G)$ with $S\neq \emptyset$.
  \item[(iv${}^{\prime}$)] $q_{k,1}(G)=q_{n-k,1}(G)$ for any $1 \le k \le n-1$.
\end{itemize}
Obviously, (i) implies (iii${}^{\prime}$), (iii${}^{\prime\prime}$),  and (iv${}^{\prime}$).
Each of (iii${}^{\prime}$) and (iii${}^{\prime\prime}$) implies (iii),
and  (iv${}^{\prime}$) implies (iv).
\end{rmk}

\section*{Acknowledgement}
The authors would like to thank the two anonymous reviewers for helpful and valuable comments.

\end{document}